\theoremstyle{plain} 
	\newtheorem{thm}{Theorem}[section] 
	\newtheorem{lemma}[thm]{Lemma}  
	\newtheorem{cor}[thm]{Corollary} 
\theoremstyle{definition}
\newcommand{\aro}{\longrightarrow}
\newcommand{\ep}{\varepsilon}
\newcommand{\id}{\text{id}}
\newcommand{\N}{\mathbb N}
\newcommand{\R}{\mathbb R}
\newcommand{\K}{ \mathcal K}
\newcommand{\f}{ \varphi}
\newcommand{\Lip}{\text{\rm{Lip}}}
\newcommand{\ev}{\text{\rm{ev}}}
\newcommand{\diam}{\text{\rm{diam}}}
\newcommand{\T}{\mathcal T}
\newcommand{\domin}{\preccurlyeq}
\begin{document} 
 
\title	
	{Kernels, distances, and bridges}
 
\author[H. Movahedi-Lankarani]
	{H. Movahedi-Lankarani}
\address{Department of Mathematics\\
	Penn State Altoona\\
         Altoona, PA 16601-3760} 
\email{hxm9@psu.edu}
 
\author [R. Wells]
        {R. Wells}
\address{Department of Mathematics\\
         Penn State University\\
         University Park, PA 16802}

\keywords{almost distance;  bi-Lipschitz embedding; bridge; canonical map; distance;  Lipschitz;  uniform point separation} 
\subjclass{Primary: 54E40, 58C20; Secondary: 54C25, 54F45, 54F50, 58C25, 57R35, 57R40, 26B05}

\date{\today}

\maketitle 

\abstract  
The purpose of this paper is  to study more general real-valued functions of two variables than just metrics on a set $X$.  We  concentrate mainly on the  classes of distances and  almost distances.   We also introduce the notion of a  bridge on the disjoint union of two sets and show that it induces a symmetric distance on the disjoint union.
 \endabstract


\section{Introduction}

This paper is mainly concerned with certain classes of kernels. 
By a  kernel on a set $X$ we mean a  function $\kappa : X \times X \longrightarrow \mathbb R$.    Usually, a subset of the following list of (overlapping)  conditions is imposed on $\kappa$: For all $x, y, z \in X$
\begin{enumerate}
\item[(a)]  $\kappa (x, z) \le \kappa (x, y) + \kappa (y, z)$;\ \ \ (triangle inequality)
\item[(b)]  $0 \le \kappa (x, y)$;
\item[(c)]  $\kappa (x, x) = 0$;
\item[(d)]  $\kappa (x, y) = 0 {\text{ and }} \kappa (y, x) = 0$  imply that $x = y$;
\item[(e)]  $\kappa (x, y) = 0$ implies  $x = y$; 
\item[(f)]  $\kappa (x, y) = \kappa (y, x)$. \ \ \  (symmetry)
\end{enumerate}

\noindent  We note that condition (a) implies that   $0 \le \kappa (x, x)$ for all $x \in X$  so that  $ 0 \le \kappa (x, y) + \kappa (y, x)$, for all $x, y \in X$.   Consequently, conditions (a) and (f) together imply  condition (b).   Also, the two  conditions (a) and (b) imply that for all $x, y, z \in X$ we have 
\[
\bigl\vert \kappa  (x, z) - \kappa  (y, z)\bigr\vert \vee \bigl\vert \kappa  (z, x) - \kappa  (z, y)\bigr\vert \le  \kappa (x, y) \vee  \kappa (y, x).
\] 
\noindent  For a very simple example of a kernel which satisfies conditions (a) -- (d) but not (e), let $\kappa : [0, \infty) \times [0, \infty) \aro \R$ by setting $\kappa (x, y) = (y - x) \vee 0$.   With regards to terminology, a kernel  $\kappa$ is called  a
\begin{enumerate}
 \item[(i)]   {\bf distance} if and only if it satisfies (a) and (b);
\item[(ii)]    weak metric  in \cite{ribeiro} --  see also \cite[page 206]{kelley} -- if and only if it satisfies (a) -- (c);
\item[(iii)]     multimetric   if and only if it satisfies (a) -- (d);
\item[(iv)]   quasi-metric in \cite{quasimet} and  \cite{approach} if and only if   it satisfies (a) -- (e); 
\item[(v)]     pseudometric  if and only if   it satisfies (a),  (b),   (c), and (f);
\item[(vi)]     metric  if and only if   it satisfies (a) -- (f).
\end{enumerate}

\noindent  In \cite{ribeiro}, it is shown that a compact Hausdorff space with a weak metric is metrizable and  the  question is raised as to whether compact Hausdorff hypothesis may be  replaced by normal. This question is answered in the negative in \cite{quasimet}  by constructing a completely normal quasi-metric space which is not metrizable.

\vskip5pt
 
The paper is organized as follows:   Section 2 is mostly about distances and their properties.   An  unexpected, but simple,  result is Theorem \ref{ivt}   which may be viewed as  the inverse function theorem  for bi-Lipschitz embeddings; see also   Lemma \ref{wowlem}.  In Section 3, after generalizing a folklore result  in Lemma \ref{1kapsighat},      almost distances are defined and  some of their basic properties are established.  A symmetric almost distance which is zero on the diagonal is called a  quasimetric in  \cite{juha} and a  b-metric  in \cite{czer}.  Using the Chittenden metrization theorem \cite{chit}, it is shown in \cite{som} that the topology induced by a b-metric is in fact metrizable.  Section 4 is entirely devoted to the notion of a bridge on the disjoint union of two sets, while Section 5 deals with kernels on a probability space satisfying the Hilbert Schmidt condition.  In particular, it is shown that many of the results in \cite[Section 3]{errbilip} for metrics hold true, with very little change,  for distances. The notion of uniform point separation from \cite{bilip} is revisited in Section 6, and we finish the paper by remarking on some related issues in Section 7.


\section{Distances}

   Writing $ \mathcal K (X)$  for the class of all kernels on a set $X$, it is  obvious  that with operations of pointwise addition and multiplication by scalars,   $\mathcal K (X)$ is a linear space over $\R$.  Each kernel  $\kappa \in \mathcal K (X)$ induces a topology on $X$, which we call  the $\kappa$-topology  and denote it by  $\T_{\kappa}$,  as follows:   The topology $\T_{\kappa}$ is the smallest topology in which all functions $\kappa (x, -): X \aro \R$ and $\kappa (-, x): X \aro \R$ are continuous;   for details  see    remark I in Section 7.   Clearly then,  if $\T$  is a topology on $X$ in which every $\kappa (x, -)$ and every $\kappa (-, x)$ is $\T$-continuous, then    $\id: (X, \T) \aro (X, \T_{\kappa})$ is continuous.  Henceforth, we write 
\[
\mathcal K^+ (X) = \{\kappa \in \mathcal K (X) \bigm\vert \kappa (x, y) \ge 0 {\text{ for all }} x, y \in X\}.
\] 

   Given two distances on $X$, their sum, their maximum, or any convex combination of the two is again a  distance.   The following result is folklore; we will  also  prove a more general version  of it for almost distances  in the next section; see Lemma \ref{1kapsighat} and Corollary \ref{genhat}.  For  $\kappa \in \K (X)$, we  write 
\[
X_0 (\kappa) =  \{x \in X \bigm\vert \kappa (x, x) = 0\}.
\]

\begin{lemma}
\label{uniquelargest}
Let $\kappa \in \mathcal K^+ (X)$.  Then there is a unique largest distance $\hat\kappa \le \kappa$.
Moreover,  $\hat\kappa$ is $\T_{\kappa} \times \T_{\kappa}$-continuous on $X_0 (\kappa) \times X_0 (\kappa)$.
\end{lemma}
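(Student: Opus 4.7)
The plan is to construct $\hat\kappa$ as an infimum over chains and then read off both properties from the construction.

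\medskip

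\noindent\textbf{Step 1: Construction.} For $x, y \in X$, set
\[
\hat\kappa (x, y) = \inf \left\{ \sum_{i=0}^{n-1} \kappa (x_i, x_{i+1}) \ \Big|\  n \ge 1,\ x_0 = x,\ x_n = y,\ x_i \in X \right\}.
\]
Since $\kappa \ge 0$, we have $\hat\kappa \ge 0$, and taking $n=1$ gives $\hat\kappa \le \kappa$. The triangle inequality is immediate: any chain from $x$ to $y$ concatenated with a chain from $y$ to $z$ is a chain from $x$ to $z$, so $\hat\kappa(x,z) \le \hat\kappa(x,y) + \hat\kappa(y,z)$. Thus $\hat\kappa$ is a distance dominated by $\kappa$.

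\medskip

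\noindent\textbf{Step 2: Maximality and uniqueness.} Let $\mu$ be any distance on $X$ with $\mu \le \kappa$. For any chain $x = x_0, x_1, \dots, x_n = y$, iterated use of the triangle inequality for $\mu$ gives
\[
\mu (x, y) \le \sum_{i=0}^{n-1} \mu (x_i, x_{i+1}) \le \sum_{i=0}^{n-1} \kappa (x_i, x_{i+1}).
\]
Taking the infimum over chains yields $\mu \le \hat\kappa$. In particular, $\hat\kappa$ is the unique largest such distance.

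\medskip

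\noindent\textbf{Step 3: Continuity on $X_0(\kappa) \times X_0(\kappa)$.} Because $\hat\kappa$ is itself a distance, the inequality displayed in the introduction applies to it: for all $x,y,z \in X$,
\[
\bigl\vert \hat\kappa (x, z) - \hat\kappa (y, z) \bigr\vert \vee \bigl\vert \hat\kappa (z, x) - \hat\kappa (z, y) \bigr\vert \le \hat\kappa (x, y) \vee \hat\kappa (y, x).
\]
Fix $(x_0, y_0) \in X_0(\kappa) \times X_0(\kappa)$. Splitting and applying this inequality twice,
\[
\bigl\vert \hat\kappa (x, y) - \hat\kappa (x_0, y_0) \bigr\vert \le \bigl( \hat\kappa (x, x_0) \vee \hat\kappa (x_0, x) \bigr) + \bigl( \hat\kappa (y, y_0) \vee \hat\kappa (y_0, y) \bigr).
\]
Since $\hat\kappa \le \kappa$, each term is bounded by the corresponding quantity with $\kappa$. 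Now, $\kappa(x_0, -)$ and $\kappa(-, x_0)$ are $\T_\kappa$-continuous by the very definition of $\T_\kappa$, and $\kappa(x_0, x_0) = 0$ because $x_0 \in X_0(\kappa)$; hence $\kappa(x_0, x) \vee \kappa(x, x_0) \to 0$ as $x \to x_0$ in $\T_\kappa$, and similarly for $y$. Continuity of $\hat\kappa$ at $(x_0, y_0)$ follows.

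\medskip

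The only nontrivial issue is the continuity step, and even there the real content is the observation that the distance-inequality from the introduction transfers from $\kappa$ to $\hat\kappa$, combined with the fact that being in $X_0(\kappa)$ is what makes $\kappa$ vanish on the diagonal at the base point—so that $\T_\kappa$-convergence produces $\kappa$-smallness, and hence $\hat\kappa$-smallness.
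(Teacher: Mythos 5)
Your proof is correct and follows essentially the same route as the paper: the identical chain-infimum construction of $\hat\kappa$, the identical maximality argument, and a continuity argument that differs only cosmetically --- you invoke the one-variable Lipschitz-type inequality for distances where the paper applies the triangle inequality for $\hat\kappa$ directly, but both reduce to $\hat\kappa \le \kappa$ combined with the $\T_{\kappa}$-continuity of $\kappa (x_0, -)$ and $\kappa (-, x_0)$ and the vanishing of $\kappa$ on the diagonal at points of $X_0 (\kappa)$.
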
  

\begin{proof}  The construction of $\hat\kappa$ is a classical one:  For $x, y \in X$, we set 
\begin{equation}
\label{kappahat}
\hat\kappa  (x, y) = \inf \, \biggl\{\sum_{i = 0}^{ n - 1} \kappa (z_i, z_{i + 1}) \bigm\vert z_0 = x, \  z_n = y, {\text{ and }}  n \in \N\biggr\},
\end{equation}

 \noindent and it is clear that $\hat\kappa (x, y) \le \kappa (x, y)$.
 
 \vskip5pt  
 
 To see that   $\hat\kappa$ satisfies the triangle inequality, let $x, y, z \in X$ and let $\ep > 0$.  Then there exists a chain $u_0 = x, \cdots, u_n = y$ such that  
 \[
 \sum_{i = 0}^{n - 1} \kappa (u_i, u_{i + 1}) - \hat\kappa (x, y) < \ep / 2.
 \]
 
\noindent Also,  there exists a chain $v_0 = y, \cdots, v_m = z$ such that
 \[
 \sum_{i = 0}^{m - 1} \kappa (v_i, v_{i + 1}) - \hat\kappa (y, z) < \ep / 2.
 \]
 
 \noindent Then $u_0 = x, \cdots, u_n = y = v_0, \cdots, v_m = z$  is a chain from $x$ to $z$ with 
 \[
 \aligned
\hat\kappa (x, z) & \le  \sum_{i = 0}^{n - 1} \kappa (u_i, u_{i + 1})  +  \sum_{i = 0}^{m - 1} \kappa (v_i, v_{i + 1})  \\
& < \hat\kappa (x, y) + \hat\kappa (y, z) + \ep.
\endaligned
\]

\noindent Hence, $\hat\kappa (x, z) \le \hat\kappa (x, y) + \hat\kappa (y, z)$, and $\hat\kappa$ is a distance.

\vskip5pt

We next check that $\hat\kappa \le \kappa$ is the largest such distance.  Let $\lambda \le \kappa$ be a distance.  Then for any $x, y \in X$ and any chain $z_0 = x, \cdots, z_n = y$ we have 
\[
\lambda (x, y) \le \sum_{i = 0}^{n - 1} \lambda (z_i, z_{i + 1} ) \le \sum_{i = 0}^{n - 1} \kappa (z_i, z_{i + 1} ).
\]

\noindent Hence, 
\[
\lambda (x, y) \le  \inf \, \biggl\{\sum_{i = 0}^{ n - 1} \kappa (z_i, z_{i + 1} \bigm\vert z_0 = x, \  z_n = y, {\text{ and }}  n \in \N\biggr\} =\hat\kappa  (x, y).
\]

Finally, we show that $\hat\kappa$ is $\T_{\kappa} \times \T_{\kappa}$-continuous on  $X_0 (\kappa)\times X_0 (\kappa)$.  To this end, let $\ep > 0$ and let $(x, y) \in X_0 (\kappa) \times X_0 (\kappa)$.   Since the function $\kappa (-, x)$  is  $\T_{\kappa}$-continuous, there is a  $\T_{\kappa}$-neighborhood $U$ of $x$ such that if $x^{\prime} \in U \cap X_0 (\kappa)$, then we have $\kappa (x, x^{\prime}) \vee \kappa (x^{\prime}, x) < \ep/2$.  Similarly,  since   the function $\kappa (y, -)$ is $\T_{\kappa}$-continuous, there is a $\T_{\kappa}$-neighborhood $V$ of $y$ such that if $y^{\prime} \in V \cap X_0 (\kappa)$, then $\kappa (y^{\prime}, y) \vee  \kappa (y, y^{\prime}) < \ep/2$.  Hence, if $(x^{\prime}, y^{\prime}) \in \bigl(U \cap X_0 (\kappa)\bigr) \times \bigl(V \cap X_0 (\kappa)\bigr)$, then
\[
\aligned
\hat\kappa (x^{\prime}, y^{\prime}) &\le \hat\kappa (x^{\prime}, x) + \hat\kappa (x, y) + \hat\kappa (y, y^{\prime}) \\
&\le  \kappa (x^{\prime}, x) +  \hat\kappa (x, y) + \kappa (y, y^{\prime})\\
& < \hat\kappa (x , y) + \ep.
\endaligned
\]

\noindent In a similar way, if $(x^{\prime}, y^{\prime}) \in \bigl(U \cap X_0 (\kappa)\bigr) \times \bigl(V \cap X_0 (\kappa)\bigr)$, then   $\hat\kappa (x, y)  < \hat\kappa (x^{\prime} , y^{\prime}) + \ep$.   Consequently,  $(x^{\prime}, y^{\prime}) \in \bigl(U \cap X_0 (\kappa)\bigr) \times \bigl(V \cap X_0 (\kappa)\bigr)$ implies that $\left\vert\hat\kappa (x, y) -  \hat\kappa (x^{\prime} , y^{\prime})\right\vert < \ep$, and $\hat\kappa$ is  $\T_{\kappa} \times \T_{\kappa}$-continuous at $(x, y)$.	 
\end{proof}

The following corollary is  clear.

\begin{cor}
\label{hatmeasur}
The transformation $\xymatrix{ \mathcal K^+ (X)	\ar[r]^{\widehat{}} &\mathcal K^+ (X)}$ is a projection.  Moreover, a  distance  $\kappa$ on a  set  $X$  is $\T_{\kappa} \times \T_{\kappa}$-continuous on $X_0 (\kappa) \times X_0 (\kappa)$.
\end{cor}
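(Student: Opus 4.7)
The plan is to derive both assertions directly from Lemma \ref{uniquelargest}. The essential observation is that whenever $\kappa \in \K^+(X)$ is already a distance, then $\kappa$ itself trivially qualifies as the largest distance $\le \kappa$, so $\hat\kappa = \kappa$. This fact is what links the two parts of the corollary and removes any real work.

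For the projection claim I would start with an arbitrary $\kappa \in \K^+(X)$. Lemma \ref{uniquelargest} yields that $\hat\kappa$ is a distance; in particular $\hat\kappa \in \K^+(X)$, since by definition any distance is nonnegative. Applying the hat operation to $\hat\kappa$ therefore produces the largest distance $\le \hat\kappa$, which is $\hat\kappa$ itself by the preceding observation. Hence $\widehat{\hat\kappa} = \hat\kappa$, which is exactly the idempotence that realises the hat operation as a projection on $\K^+(X)$.

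For the continuity claim, let $\kappa$ be a distance on $X$. Then $\hat\kappa = \kappa$ by the same observation, and the last sentence of Lemma \ref{uniquelargest} immediately says that $\hat\kappa$, and hence $\kappa$, is $\T_\kappa \times \T_\kappa$-continuous on $X_0(\kappa) \times X_0(\kappa)$.

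No serious obstacle arises; the entire corollary is a bookkeeping consequence of Lemma \ref{uniquelargest}, modulo the minor verification that a distance is its own largest distance majorant. If one wished to spell that verification out, one could simply note that $\kappa \le \kappa$ is a distance with no strictly larger distance lying below $\kappa$, so the defining extremal property of $\hat\kappa$ forces $\hat\kappa = \kappa$.
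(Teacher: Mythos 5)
Your proposal is correct and follows exactly the reasoning the paper intends (the paper simply declares the corollary ``clear'' after Lemma \ref{uniquelargest}): a distance is its own largest distance majorant, so $\widehat{\hat\kappa}=\hat\kappa$ gives idempotence, and for a distance $\kappa$ the identity $\hat\kappa=\kappa$ transfers the continuity statement of the lemma directly to $\kappa$. Nothing is missing.
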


Let $\mathcal D \K^+ (X) =  \bigl\{\kappa \in \K^+ (X) \bigm\vert \kappa {\text{ is a distance}}\bigr\}$.    In view of the lemma above, we see that  if $\emptyset \ne S \subset \mathcal D \K^+ (X)$ is bounded above by $\kappa$, then $\hat\kappa = {\text{lub}}\,  S$.   Similarly, if $\emptyset \ne S \subset \mathcal D \K^+ (X)$ is bounded below, then ${\text{ lub}} \, \bigl\{\lambda \in \mathcal D\K^+ (X) \bigm\vert \lambda \le \sigma, \, \sigma \in S\bigr\} = {\text{glb}} \, S$.

\begin{cor}
\label{lattice}
The lattice $\bigl(\mathcal D \K^+ (X), \, \ge\bigr)$ is  a  complete  lattice.
\end{cor}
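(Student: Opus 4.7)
The plan is to verify that every nonempty subset $S \subseteq \mathcal D\K^+ (X)$ possesses both a least upper bound and a greatest lower bound with respect to the pointwise ordering. Since the zero kernel is itself a distance that is dominated by every element of $\mathcal D\K^+ (X)$, a universal lower bound is automatically present, and the real work consists in handling least upper bounds.

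First, for a nonempty $S$ bounded above pointwise, define $\sigma (x, y) = \sup_{\lambda \in S} \lambda (x, y)$. This is a well-defined, real-valued, nonnegative function on $X \times X$, and the triangle inequality is preserved under suprema because for any $x, y, z \in X$
\[
\sigma (x, z) = \sup_{\lambda \in S} \lambda (x, z) \le \sup_{\lambda \in S} \bigl[\lambda (x, y) + \lambda (y, z)\bigr] \le \sigma (x, y) + \sigma (y, z).
\]
Hence $\sigma \in \mathcal D\K^+ (X)$, and it is tautologically the lub of $S$. Equivalently, applying Lemma \ref{uniquelargest} to $\sigma \in \mathcal K^+ (X)$ gives $\hat\sigma = \sigma$, in agreement with the observation preceding the corollary.

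Next, for the greatest lower bound of an arbitrary nonempty $S$, consider
\[
L = \bigl\{\mu \in \mathcal D\K^+ (X) \bigm\vert \mu \le \lambda {\text{ for all }} \lambda \in S\bigr\}.
\]
Then $L$ is nonempty (it contains the zero distance) and bounded above by any element of $S$, so by the preceding step it has a lub $\tau \in \mathcal D\K^+ (X)$. The element $\tau$ is a lower bound of $S$---each $\lambda \in S$ is an upper bound of $L$, and $\tau$ is the least such---and conversely any lower bound of $S$ lies in $L$ and is therefore dominated by $\tau$. Hence $\tau = \text{glb}\, S$.

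The main potential obstacle would be the failure of pointwise suprema of distances to remain distances, but as seen above the triangle inequality passes through suprema for free. Once lubs exist for bounded-above families, the symmetric construction of glbs is a standard lattice-theoretic maneuver, and no further difficulty arises.
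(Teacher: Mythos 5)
Your proposal is correct and takes essentially the same route as the paper: the least upper bound of a pointwise-bounded family is obtained from the pointwise supremum (the paper routes this through $\hat\kappa$ of Lemma \ref{uniquelargest}, while you also check directly that a pointwise supremum of distances is again a distance), and greatest lower bounds are then manufactured as the least upper bound of the set of lower bounds, exactly as in the discussion preceding the corollary. Like the paper, your argument really yields only conditional completeness for families with no pointwise upper bound, but that imprecision lies in the statement itself rather than in your proof.
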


Now let $\kappa, \f \in \K^+ (X)$.  Then $\kappa\vee \f$ and $\kappa + \f \in \K^+ (X)$ with $(\kappa \vee \f\widehat{)} \ge \hat\kappa \vee \hat\f$ and $(\kappa + \f\widehat{)} \ge \hat\kappa + \hat\f$.  Moreover, if $\f \le \kappa$, then  $\kappa - \f \in \K^+ (X)$ with $(\kappa - \f\widehat{)} \le \hat\kappa - \hat\f$.  Consequently, we have  $\bigl(\kappa - \hat\kappa\widehat{\bigr)} \le \hat\kappa - \hat\kappa = 0$.  This proves the following lemma.

\begin{lemma}
\label{delta-deltahat}
For every  $\kappa \in \mathcal K^+ (X)$  we have  $\bigl(\kappa - \hat\kappa\widehat{\bigr)} = 0$.
\end{lemma}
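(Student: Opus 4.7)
The plan is to derive the lemma as an immediate consequence of two facts already available: the subtraction inequality $(\kappa - \f\widehat{)} \le \hat\kappa - \hat\f$ whenever $\f \le \kappa$ in $\K^+(X)$, asserted in the paragraph just before the lemma, together with the projection property $\bigl(\hat\kappa\widehat{\bigr)} = \hat\kappa$ furnished by Corollary \ref{hatmeasur}.

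First I would unpack that subtraction inequality from the chain definition in equation (\ref{kappahat}).  For any chain $z_0 = x, z_1, \ldots, z_n = y$,
\[
\sum_{i=0}^{n-1}(\kappa - \f)(z_i, z_{i+1}) \;=\; \sum_{i=0}^{n-1}\kappa(z_i, z_{i+1}) \;-\; \sum_{i=0}^{n-1}\f(z_i, z_{i+1}) \;\le\; \sum_{i=0}^{n-1}\kappa(z_i, z_{i+1}) \;-\; \hat\f(x, y),
\]
using only that $\hat\f(x, y)$ is a lower bound for the chain sums of $\f$.  Taking the infimum over all chains from $x$ to $y$ produces $(\kappa - \f\widehat{)}(x, y) \le \hat\kappa(x, y) - \hat\f(x, y)$, which is the inequality advertised.

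Next I would specialize by choosing $\f = \hat\kappa$, a legitimate substitution since $\hat\kappa \in \K^+(X)$ and $\hat\kappa \le \kappa$ by Lemma \ref{uniquelargest}.  Invoking the idempotence $\bigl(\hat\kappa\widehat{\bigr)} = \hat\kappa$, one obtains
\[
\bigl(\kappa - \hat\kappa\widehat{\bigr)} \;\le\; \hat\kappa - \bigl(\hat\kappa\widehat{\bigr)} \;=\; \hat\kappa - \hat\kappa \;=\; 0.
\]
The reverse inequality $\bigl(\kappa - \hat\kappa\widehat{\bigr)} \ge 0$ is automatic because $\kappa - \hat\kappa \in \K^+(X)$ forces every chain sum, and hence the infimum, to be nonnegative.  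Combining the two inequalities yields the desired equality.

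The main obstacle is essentially nonexistent: once the subtraction inequality and the projection property of the hat operation are recognized, the argument collapses to a single line.  The only step warranting care is the passage from the pointwise bound $\sum \f(z_i, z_{i+1}) \ge \hat\f(x, y)$ through the difference to the infimum, and that bound is simply the definition of $\hat\f$.
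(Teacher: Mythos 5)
Your proposal is correct and follows essentially the same route as the paper, which establishes the lemma in the paragraph preceding it by invoking the subtraction inequality $(\kappa - \f\widehat{)} \le \hat\kappa - \hat\f$ with $\f = \hat\kappa$ and the idempotence of the hat operation. You merely add two details the paper leaves implicit: the chain-wise derivation of the subtraction inequality and the observation that nonnegativity gives the reverse bound.
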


Let   
\begin{equation}
\label{0ondiagonal}
\aligned
\mathcal K_0 (X) & = \left\{\kappa \in \K (X) \bigm\vert X_0 (\kappa) = X\right\}\\
&= \left\{\kappa \in \mathcal  K (X) \bigm\vert \kappa (x, x) = 0   {\text{ for all }} x \in X\right\}.
\endaligned
\end{equation} 

\noindent  Then  $\mathcal K_0 (X)$ is a linear subspace of $\mathcal K (X)$.  If $\kappa \in \K$, then the kernel $\widetilde\kappa$  defined by  setting $\widetilde\kappa (x, x) = 0$ and $\widetilde\kappa (x, x^{\prime}) = \kappa (x, x^{\prime})$  for  $x \ne x^{\prime}$   is  in $\K_0 (X)$.  However, this is a crude construction and the two topologies $\T_{\kappa}$ and $\T_{\widetilde\kappa}$ are different.  Here is a  perhaps  better way: There is a linear map $Z: \mathcal K (X) \aro \mathcal K_0 (X)$ given by setting  $Z \kappa = \kappa -  \Lambda  \kappa$, where 
\[
\bigl(\Lambda \kappa\bigr) (x, y) = \frac{1}{2} \bigl[\kappa (x, x) + \kappa (y, y)\bigr]. 
\]

\noindent    It is clear that $\bigl(\Lambda \kappa\bigr) (x, x) = \kappa (x, x)$ so that $(Z \kappa) (x, x) = 0$.  Moreover, $\Lambda^2 = \Lambda$ and $Z \Lambda = 0 = \Lambda Z$. It is now easily checked that     $Z^2 = Z$ so that the map $Z$ is a projection.

\vskip5pt

The following corollary  is  immediate as well.

\begin{cor}
\label{pseudo}
If $\kappa \in \mathcal K^+ (X) \cap \mathcal K_0 (X)$  is symmetric, then $\hat\kappa$ is a pseudometric.
\end{cor}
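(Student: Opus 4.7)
The plan is to check that $\hat\kappa$ satisfies the four defining conditions (a), (b), (c), (f) of a pseudometric. Conditions (a) and (b) come for free: Lemma \ref{uniquelargest} already establishes that $\hat\kappa$ is a distance, i.e.\ it satisfies the triangle inequality and is nonnegative (using that $\kappa \in \K^+(X)$). So only (c) vanishing on the diagonal and (f) symmetry remain to be verified.

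For (c), I would simply use the length-one chain $z_0 = x, z_1 = x$ in the defining infimum \eqref{kappahat}. This gives $\hat\kappa(x,x) \le \kappa(x,x) = 0$ since $\kappa \in \K_0(X)$, and combined with $\hat\kappa \ge 0$ (from (b)) we get $\hat\kappa(x,x) = 0$.

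For (f), the key observation is that reversing a chain preserves its total $\kappa$-cost when $\kappa$ is symmetric. Explicitly, given any chain $z_0 = x, z_1, \dots, z_n = y$ from $x$ to $y$, the reversed sequence $w_i := z_{n-i}$ is a chain from $y$ to $x$, and
\[
\sum_{i=0}^{n-1} \kappa(w_i, w_{i+1}) = \sum_{i=0}^{n-1} \kappa(z_{n-i}, z_{n-i-1}) = \sum_{j=0}^{n-1} \kappa(z_{j+1}, z_j) = \sum_{j=0}^{n-1} \kappa(z_j, z_{j+1}),
\]
where the last equality uses symmetry of $\kappa$. Hence the two infima defining $\hat\kappa(x,y)$ and $\hat\kappa(y,x)$ are taken over sets of numbers in bijective, value-preserving correspondence, so they are equal.

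There is no real obstacle here: the proof is a direct unpacking of the construction of $\hat\kappa$ together with the pointwise hypotheses on $\kappa$. The only thing worth being slightly careful about is that the defining infimum in \eqref{kappahat} allows arbitrary $n \in \N$ including $n=1$, which is what makes (c) immediate.
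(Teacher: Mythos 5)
Your proposal is correct, and it matches the paper's (implicit) reasoning: the paper declares this corollary immediate and gives no proof, and your argument is exactly the natural unpacking — (a) and (b) from Lemma \ref{uniquelargest}, (c) from $\hat\kappa(x,x)\le\kappa(x,x)=0$ together with nonnegativity, and (f) from the cost-preserving bijection between chains and their reversals. (A marginally slicker route to (f), in the spirit of the lemma's maximality statement, is to note that ${}^t\hat\kappa$ is also a distance bounded by ${}^t\kappa=\kappa$, hence ${}^t\hat\kappa\le\hat\kappa$ by maximality, and transposing again gives equality — but your direct computation is equally valid.)
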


As usual, for $\kappa \in \K (X)$,  we define ${}^t\kappa \in \K (X)$ by setting ${}^t\kappa (x, y) = \kappa (y, x)$.    Obviously, $\kappa \wedge {}^t\kappa \le \kappa \le \kappa \vee {}^t\kappa$.  Moreover,  if  $\kappa$ is a distance, then for any $1 \le p < \infty$, the kernel $(\kappa^p + {}^t \kappa^p)^{1/p}$ is a symmetric distance.  Indeed, if $\kappa$ is a distance, then $\kappa \wedge {}^t\kappa$ is the largest symmetric distance $\le \kappa$, while $\kappa \vee {}^t\kappa$ is the smallest symmetric distance $\ge \kappa$.

\vskip5pt

Now let $\kappa \in \mathcal K (X)$ be a weak metric.      We define a relation $\sim$ on $X$ by setting $x \sim y$ if and only if $\kappa (x, y) = \kappa (y, x) = 0$.  Then $\sim$ is  transitive because $\kappa$ is a distance; that it is symmetric and reflexive is clear.  Moreover, if $x \sim x^{\prime}$ and $y \sim y^{\prime}$, then $\kappa (x^{\prime}, y^{\prime}) = \kappa (x, y)$.  Hence, for two equivalence classes $[x]$ and $[y]$ setting $\bar\kappa \bigl([x], [y]\bigr) = \kappa (x, y)$ is well defined.  We let $\bar X = X/\sim$ and define an epimorphism  $\xymatrix{X \ar@{->>}^{\pi} [r] &\bar X}$ by setting $\pi (x) = [x]$.  Then the kernel $\bar\kappa: \bar X \times \bar X  \aro [0, \infty)$  is a quasi-metric.

\begin{cor}
\label{Xbar}
Let $\kappa \in \mathcal K (X)$  be a weak metric.  Then there exist  a quasi-metric $\bar\kappa$ on   $\bar X$ and an epimorphism $\xymatrix{X \ar@{->>}^{\pi} [r] &\bar X}$ with $\bar\kappa \circ (\pi \times \pi) = \kappa$.
\end{cor}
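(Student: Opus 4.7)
The plan is essentially a direct verification based on the ingredients already assembled in the paragraph preceding the statement. First I would confirm that $\sim$ is an equivalence relation: reflexivity is immediate from condition (c), symmetry is built into the definition, and transitivity follows from two applications of the triangle inequality, since $x \sim y$ and $y \sim z$ give $\kappa(x,z) \le \kappa(x,y) + \kappa(y,z) = 0$ and, symmetrically, $\kappa(z,x) = 0$.

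Next I would verify that setting $\bar\kappa([x],[y]) = \kappa(x,y)$ is well defined. Given $x \sim x'$ and $y \sim y'$, iterating the triangle inequality yields $\kappa(x,y) \le \kappa(x,x') + \kappa(x',y') + \kappa(y',y) = \kappa(x',y')$, because $\kappa(x,x') = \kappa(y',y) = 0$, and symmetrically $\kappa(x',y') \le \kappa(x,y)$. The projection $\pi(x) = [x]$ is surjective by construction, and the factorization $\bar\kappa \circ (\pi \times \pi) = \kappa$ is immediate from the definition.

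Finally I would check that $\bar\kappa$ satisfies the quasi-metric axioms on $\bar X$. Non-negativity, the vanishing on the diagonal, and the triangle inequality all descend directly from the corresponding properties of $\kappa$; in each case the verification consists of evaluating $\bar\kappa$ on arbitrary representatives and quoting the matching property of $\kappa$. The point that requires attention is separation: if $\bar\kappa([x],[y]) = 0 = \bar\kappa([y],[x])$, then $\kappa(x,y) = \kappa(y,x) = 0$, so $x \sim y$ and hence $[x] = [y]$. I expect this last step to be the main obstacle, because what the construction produces directly is the symmetric form (d) of the separation condition; promoting it to the stronger quasi-metric axiom (e) amounts to showing that $\kappa(x,y) = 0$ already forces $\kappa(y,x) = 0$ on a weak metric, so clarifying that this is indeed the intended reading is the one genuinely nonroutine piece of the argument.
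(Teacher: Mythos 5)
Your argument is the same as the paper's: the corollary is justified there entirely by the paragraph that precedes it, which defines $\sim$ by $\kappa(x,y)=\kappa(y,x)=0$, checks transitivity and well-definedness exactly as you do, and then simply asserts that $\bar\kappa$ is a quasi-metric. Your verifications of the equivalence relation, of well-definedness via the two-sided triangle-inequality estimate, and of the descent of (a), (b), (c) are all correct and match the paper.

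The point you flag at the end is a genuine one, and the paper does not address it. The quotient construction delivers only the symmetric separation axiom (d): $\bar\kappa([x],[y])=\bar\kappa([y],[x])=0$ forces $[x]=[y]$. To get the paper's quasi-metric axiom (e) one would need that $\kappa(x,y)=0$ already implies $\kappa(y,x)=0$ for a weak metric, and this fails: the paper's own introductory example $\kappa(x,y)=(y-x)\vee 0$ on $[0,\infty)$ is a weak metric for which $\sim$ is trivial, so $\bar\kappa=\kappa$, and it satisfies (d) but not (e) (e.g.\ $\kappa(1,0)=0$ while $\kappa(0,1)=1$). So as stated, with the paper's definition (iv) of quasi-metric, the conclusion is too strong; the construction yields a multimetric in the sense of (iii). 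Your instinct that this is the one nonroutine point is right, and the correct resolution is to weaken the conclusion (or to read ``quasi-metric'' as ``multimetric'' here), not to try to prove (e).
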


We finish this section  with the following simple but unexpected (to us) result  which may be viewed as the inverse function theorem for bi-Lipschitz embeddings.  To this end, let $\kappa$ and $\f$ be   distances on  sets  $X$ and $Y$, respectively.  We say that a map $f: X \aro Y$ is  {\bf{Lipschitz}} in  $\kappa$ and $\f$ provided that there is $0 \le  L$ such that 
\[
\f \bigl(f (x), f (x^{\prime})\bigr)  \le L\, \bigl[\kappa (x, x^{\prime}) \vee \kappa (x^{\prime}, x)\bigr],
\]

\noindent for all $x, x^{\prime} \in X$.   In this case, if    $\kappa$ is $\T_{\kappa} \times \T_{\kappa}$-continuous and $\f$ is $\T_{\f} \times \T_{\f}$-continuous, then $f: (X, \T_{\kappa}) \aro (Y, \T_{\f})$ is continuous.   As usual, the Lipschitz constant of $f$ is given by
\[
\Lip (f) = \sup_{x \ne x^{\prime}} \, \frac{\f \bigl(f (x),  f (x^{\prime})\bigr)}{\kappa (x, x^{\prime}) \vee  \kappa (x^{\prime}, x)}.
\]
 
\noindent We say that $f$ is {\bf{bi-Lipschitz}}  in  $\kappa$ and $\f$ provided that $f^{- 1}$ is also Lipschitz  in  $\f$ and $\kappa$.  This is the case precisely when there exist $0 < \ell \le L$ such that
\[
\aligned
 \ell \, \bigl[\kappa (x, x^{\prime}) \vee \kappa (x^{\prime}, x)\bigr] 
 & \le \f \bigl(f (x), f (x^{\prime})\bigr) \\
 & \le L \, \bigl[\kappa (x, x^{\prime}) \vee \kappa (x^{\prime}, x)\bigr]
\endaligned
\]

\noindent  for all $x, x^{\prime} \in X$ We say that $f$ is a  {\bf{formal isometry}}  if $f$ is  bi-Lipschitz in $\kappa$ and $\f$  with $\ell = 1 = L$.  We note  that a formal isometry does not necessarily preserve the distance.  For instance, $\id: (X, \kappa) \aro (X, {}^t\kappa)$ is a formal isometry.  If,  however, we require  isometries  to preserve the distance, then  both $\kappa$ and $\f$ have to be  symmetric.

\vskip5pt

Given two pseudometric spaces $(X, \kappa)$ and $(Y, \sigma)$, we write $\kappa \oplus \sigma$ for any one of the bi-Lipschitz equivalent pseudometrics 
\[
\kappa \vee \sigma \le \left(\kappa^p + \sigma^p\right)^{1/p} \le 2^{1/p}  (\kappa \vee \sigma), \  1 \le p < \infty,
\]

\noindent on $X \times Y$.  

\begin{thm}[Inverse Function Theorem]
\label{ivt}
Let $\kappa$ be a weak metric on a set $X$  and let $(Y, \sigma)$ be a pseudometric space.     Then a map $f: X \aro Y$ is lower Lipschitz if and only if  $\kappa: \left(X \times X, f^{\ast} \sigma  \oplus  f^{\ast} \sigma\right) \aro [0, \infty]$ is Lipschitz.  In this case, we have $\Lip (\kappa) = \Lip \left(f^{- 1}: f (X) \aro X\right)$.  If,  in addition, $f$ is Lipschitz, then $f: (X, \kappa) \aro \bigl(f (X), \sigma\bigr)$  is a bi-Lipschitz equivalence.
\end{thm}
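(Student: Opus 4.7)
The plan is to reduce the equivalence to the ``triangle-type'' estimate
\[
|\kappa(x_1, x_2) - \kappa(x_1', x_2')| \le \bigl[\kappa(x_1, x_1') \vee \kappa(x_1', x_1)\bigr] + \bigl[\kappa(x_2, x_2') \vee \kappa(x_2', x_2)\bigr],
\]
which comes from two applications of the inequality displayed just after conditions (a) and (b) in the introduction. Everything else is a matter of unpacking what Lipschitz continuity of $\kappa$ with respect to the pullback product pseudometric $f^{\ast}\sigma \oplus f^{\ast}\sigma$ means, exploiting that $\kappa(x,x) = 0$ because $\kappa$ is a weak metric and that $\sigma(y,y) = 0$ because $\sigma$ is a pseudometric.

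For sufficiency, assume $f$ is lower Lipschitz with $\ell\,[\kappa(x,x') \vee \kappa(x',x)] \le \sigma(f(x), f(x'))$ for all $x, x' \in X$, so that $\Lip(f^{-1}: f(X) \to X) = 1/\ell$. Inserting this bound into each bracket on the right of the displayed estimate gives
\[
|\kappa(x_1, x_2) - \kappa(x_1', x_2')| \le \tfrac{1}{\ell}\bigl[\sigma(f(x_1), f(x_1')) + \sigma(f(x_2), f(x_2'))\bigr],
\]
and choosing $\oplus$ to be the additive version this reads $\Lip(\kappa) \le 1/\ell = \Lip(f^{-1})$.

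For necessity, suppose that $\kappa$ is Lipschitz on $X \times X$ with constant $L = \Lip(\kappa)$. Fix $x, x' \in X$ and test the Lipschitz condition on the diagonal pairs $(x,x), (x',x) \in X \times X$. Because $\kappa(x,x) = 0$ and $\sigma(f(x), f(x)) = 0$, every admissible choice of $\oplus$ makes the product pseudometric between these two pairs equal $\sigma(f(x), f(x'))$, whence
\[
\kappa(x', x) = \bigl|\kappa(x,x) - \kappa(x', x)\bigr| \le L\, \sigma(f(x), f(x')).
\]
The identical argument applied to the pairs $(x,x)$ and $(x,x')$ gives $\kappa(x, x') \le L\, \sigma(f(x), f(x'))$; taking the maximum produces the lower Lipschitz bound for $f$ with $\Lip(f^{-1}) \le L$, and combined with the reverse inequality from sufficiency this yields the claimed equality of Lipschitz constants.

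The closing assertion is then immediate: if $f$ is additionally Lipschitz we have two-sided control
\[
\ell\, [\kappa(x,x') \vee \kappa(x',x)] \le \sigma(f(x), f(x')) \le L\, [\kappa(x,x') \vee \kappa(x',x)],
\]
which is precisely the condition for $f: (X, \kappa) \to (f(X), \sigma)$ to be a bi-Lipschitz equivalence. The only delicate point, and the main obstacle to state a clean equality, is bookkeeping around the convention for $\oplus$: the constants match on the nose when $\oplus$ is the additive version and only up to a factor of two for the supremum version, but since the various admissible choices are bi-Lipschitz equivalent the qualitative statement is insensitive to this choice.
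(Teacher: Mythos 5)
Your proposal is correct and follows essentially the same route as the paper: for necessity you test the Lipschitz condition on pairs that collapse one coordinate (using $\kappa(x,x)=0$ and $\sigma(f(x),f(x))=0$), and for sufficiency you bound $|\kappa(x_1,x_2)-\kappa(x_1',x_2')|$ by a triangle-type estimate coming from the displayed inequality after conditions (a)--(b) and then insert the lower Lipschitz bound — the paper does exactly this, only writing the estimate as $\bigl[\kappa(x,x')+\kappa(y',y)\bigr]\vee\bigl[\kappa(x',x)+\kappa(y,y')\bigr]$ rather than your (slightly weaker but equally sufficient) sum of maxima. Your closing remark about the $\oplus$ convention affecting the constant is a fair point of bookkeeping that the paper's terse ``Clearly, $\ell = 1/L$'' glosses over.
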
  

\begin{proof} For the only if part, assume that there is $L > 0$ such that 
\[
\aligned
\bigl\vert\kappa (x, y) - \kappa (x^{\prime}, y^{\prime})\bigr\vert &\le L \left[f^{\ast}\sigma (x, x^{\prime}) + f^{\ast} \sigma (y, y^{\prime})\right]\\
&= L \left[\sigma \bigl(f (x), f (x^{\prime})\bigr) + \sigma \bigl(f (y), f (y^{\prime})\bigr)\right],
\endaligned
\]

\noindent for every pair of points $(x, y), (x^{\prime}, y^{\prime}) \in X \times X$.  In particular, for $x^{\prime} = y^{\prime} = y$ we have $\kappa (x, y) \le L\, \sigma \bigl(f (x), f (y)\bigr)$.

\vskip5pt

Conversely, let $(x, y), (x^{\prime}, y^{\prime}) \in X \times X$.  By assumption, there exists $0 < \ell$ such that $\ell \, \kappa (x, y) \le  \sigma \bigl(f (x), f (y)\bigr)$.  Since $\kappa$ is a distance,  we have
\[
\bigl\vert\kappa (x, y) - \kappa (x^{\prime}, y^{\prime})\bigr\vert \le \bigl[\kappa (x, x^{\prime}) + \kappa (y^{\prime}, y)\bigr] \vee \bigl[\kappa (x^{\prime}, x) + \kappa (y, y^{\prime})\bigr].
\]

\noindent Hence, 
\[
\ell \, \bigl\vert\kappa (x, y) - \kappa (x^{\prime}, y^{\prime})\bigr\vert \le  \sigma \bigl(f (x), f (x^{\prime})\bigr) + \sigma \bigl(f (y), f (y^{\prime})\bigr)
\]

\noindent because $f^{\ast} \sigma$ is symmetric.   Clearly, $\ell = 1/ L$.
\end{proof}


\section{Almost distances}

  Our goal in this section is to prove a more general version of Lemma \ref{uniquelargest};  we use a variant of a known construction.  To this end, let  $X$ be a set and  let $\kappa, \sigma \in \K (X)$. We say that $\kappa$  is  right dominated   by  $\sigma$, and  write $\kappa  \domin_{_R} \sigma$,  provided that  for all $x, y, z  \in X$,   
\begin{equation}
\label{rtdom}
\kappa (x, z) \le \kappa (x, y) + \sigma (y, z).
\end{equation}

\noindent    In a similar way, we say that $\kappa$  is  left dominated   by  $\sigma$, and write $\kappa \domin_{_L} \sigma$, provided that for all $x, y, z \in X$, 
\begin{equation}
\label{lftdom}
\kappa (x, z) \le \sigma (x, y) + \kappa  (y, z).
\end{equation}

\noindent       It follows from either of (\ref{rtdom}) or (\ref{lftdom}) that,  although $\sigma$ is not assumed to be  a distace,     we have  that $0 \le \sigma (x, x)$ for all $x \in X$ and that   $0 \le \sigma (x, y) + \sigma (y, x)$, for all $x, y \in X$.   It is clear that $\kappa \domin_{_R} \sigma$ if and only if  ${}^t\kappa \domin_{_L} {}^t\sigma$.  More precisely, letting $T: X \times X \aro X \times X$ denote the transposition map $(x, y) \mapsto (y, x)$, we see that  $\kappa \domin_{_R} \sigma$ if and only if  $T \circ \kappa \domin_{_L}  T \circ \sigma$.  Similarly, $\kappa \domin_{_L} \sigma$ if and only if  $\kappa \circ T \domin_{_R} \sigma \circ T$.

\vskip5pt

We set  $X_{\infty} = \coprod_{n \ge 0} X^n$; recall that $X^0 = \{\emptyset\}$.   Given non-negative kernels  $\kappa, \sigma \in \K^+  (X)$, we define a function     $S (\kappa, \sigma) : X \times X_{\infty} \times X \aro [0, \infty)$ by setting  
 \begin{equation}
\label{sum}
S  (\kappa, \sigma) (x, p, y) = \left\{\aligned
&\kappa (x, y), \ {\text{if}} \  p = \emptyset\\
&\kappa (x, x_1) + \sigma  (x_1, y), \ {\text{if}} \ p = (x_1)\\
&\kappa (x, x_1) + \sum_{1 = i}^{ n - 1}  \sigma (x_i, x_{i +1}) + \sigma (x_n, y), \ {\text{if}} \ p = (x_1,  \cdots, x_n), n \ge 2.
\endaligned\right.
\end{equation}

\noindent Clearly, in general  $S (\kappa, \sigma) \ne S  (\sigma, \kappa)$.   Also, for $n \in \N$, we define $\pi : X^n \aro X^{n - 1}$ by setting
\[
\pi (x_1, x_2, \cdots, x_n) = \left\{\aligned
&\emptyset \ {\text{ if }}\  n = 1,\\
&(x_2, x_3, \cdots, x_n) \ {\text{ if }}\  n \ge 2.
\endaligned\right.
\]

\noindent Then (\ref{sum}) may be written as
\[
S (\kappa, \sigma) (x, p, y) = \left\{\aligned
&\kappa (x, y) \ {\text{ if }} \  p = \emptyset,\\
&\kappa (x, x_1) + S  (\sigma, \sigma) (x_1, \pi p, y) \ {\text{ if }} \ p = (x_1, x_2, \cdots, x_n).
\endaligned\right.
\]

\noindent  We set  
\begin{equation}
\label{almost}
 \widehat S ( \kappa, \sigma ) (x, y) = \inf \{S (\kappa, \sigma) (x, p, y)\bigm\vert p \in X_{\infty}\},
\end{equation}

\noindent and we have $\widehat S (\kappa, \sigma)  (x, y) \le \kappa (x, y)$ for all $x, y \in X$.  There are two possible cases.  In the first case $\widehat S (\kappa, \sigma) (x, y) = \kappa (x, y)$ so that we have
\[\widehat S (\kappa, \sigma) (x, z) \le S (\kappa, \sigma) (x, y, z) = \kappa (x, y) + \sigma (y, z) = \widehat S (\kappa, \sigma) (x, y) + \sigma (y, z).
\]

\noindent In the second case, for  any $0 < \ep < \kappa (x, y) - \widehat S (\kappa, \sigma) (x, y)$ there is $\emptyset \ne p (\ep) \in X_{\infty}$ such that  
\begin{equation}
\label{Skapsig}
\widehat S (\kappa, \sigma) (x, y) \le S (\kappa, \sigma)\bigl(x, p (\ep), y\bigr) \le \widehat S (\kappa, \sigma)  (x, y) + \ep.
\end{equation}

\noindent Hence, in this case we have 
\[
\aligned
\widehat S (\kappa, \sigma)  (x, z) & \le S (\kappa, \sigma)\bigl(x, \bigl(p (\ep), y\bigr), z\bigr)\\
&=   S (\kappa, \sigma)\bigl(x, p (\ep), y\bigr) + \sigma (y, z) \le  \widehat S (\kappa, \sigma)  (x, y) + \ep + \sigma (y, z).
\endaligned
\] 

\noindent The following result now holds.

\begin{lemma}
\label{1kapsighat}
We have $\widehat S (\kappa, \sigma)  \domin_{_R} \sigma$.
\end{lemma}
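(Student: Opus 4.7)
The plan is to argue directly from the definition of $\widehat S(\kappa, \sigma)$ as an infimum over paths in $X_\infty$, exploiting the fact that we can extend any candidate path for the pair $(x, y)$ by one step to obtain a candidate path for the pair $(x, z)$. Fix $x, y, z \in X$. For every $\ep > 0$, the definition (\ref{almost}) supplies some $p(\ep) \in X_\infty$ with
\[
S(\kappa, \sigma)\bigl(x, p(\ep), y\bigr) \le \widehat S(\kappa, \sigma) (x, y) + \ep.
\]

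The key structural observation, which is essentially the rewriting of (\ref{sum}) used just above the lemma statement, is that if $q \in X_\infty$ denotes the path obtained by appending $y$ as a new final term to $p(\ep)$, then
\[
S(\kappa, \sigma) (x, q, z) = S(\kappa, \sigma)\bigl(x, p(\ep), y\bigr) + \sigma (y, z).
\]
This holds trivially case-by-case: if $p(\ep) = \emptyset$, then $q = (y)$ and the right-hand side equals $\kappa (x, y) + \sigma (y, z)$, which matches (\ref{sum}); otherwise appending $y$ to $p(\ep) = (x_1, \ldots, x_n)$ just adjoins a single extra $\sigma$-summand $\sigma (y, z)$ to the existing sum. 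Since $q$ is an admissible path from $x$ to $z$, taking the infimum gives
\[
\widehat S (\kappa, \sigma) (x, z) \le S (\kappa, \sigma) (x, q, z) \le \widehat S (\kappa, \sigma) (x, y) + \sigma (y, z) + \ep.
\]

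Letting $\ep \to 0^+$ yields $\widehat S (\kappa, \sigma) (x, z) \le \widehat S (\kappa, \sigma) (x, y) + \sigma (y, z)$, which by (\ref{rtdom}) is exactly $\widehat S (\kappa, \sigma) \domin_{_R} \sigma$. I do not foresee any real obstacle here: the two-case analysis preceding the lemma statement already verifies the inequality modulo $\ep$, so the only remaining task is to package that estimate as a clean $\ep$-approximation argument and then pass to the limit. The one point worth flagging in the write-up is that an approximating path exists even when $\widehat S (\kappa, \sigma) (x, y) = \kappa (x, y)$, since the empty path $p = \emptyset$ is always available and realizes the value $\kappa (x, y)$ exactly.
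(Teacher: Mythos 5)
Your proof is correct and follows essentially the same route as the paper: the paper's argument (given in the text immediately preceding the lemma) likewise appends $y$ to a near-optimal path from $x$ to $y$ so that the cost increases by exactly $\sigma(y,z)$, and then passes to the infimum. The only cosmetic difference is that you unify the paper's two cases (infimum attained at the empty path versus not) into a single $\ep$-approximation argument, which is a harmless streamlining.
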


\noindent In the above notation,  Lemma \ref{uniquelargest} may be written as the following corollary.

\begin{cor}
\label{genhat}
 Let $\kappa \in \mathcal K^+ (X)$.  Then there is a unique largest distance $\hat\kappa = \widehat S (\kappa, \kappa)  \le \kappa$.  Moreover, the kernel $\lambda = \kappa \vee {}^t\kappa$ is symmetric and $\hat\lambda =  \widehat S (\lambda, \lambda)$ is the unique largest symmetric distance $\le \lambda$.  Also,  writing $\kappa_0 (x, y) = \kappa (x, y)$ for $x \ne y$ and $\kappa_0 (x, x) = 0$, we see that $(\hat\kappa)_0 = \widehat S (\kappa, \kappa)_0 = \widehat S (\kappa_0, \kappa_0) = \widehat{(\kappa_0)}$.
\end{cor}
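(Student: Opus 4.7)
The plan is to derive all three assertions from the chain-sum description (\ref{almost}) of $\widehat S$, exploiting the fact that when $\sigma = \kappa$ the quantity $S(\kappa,\kappa)(x,p,y)$ for $p = (x_1,\dots,x_n)$ collapses to a pure chain sum $\sum_{i=0}^{n-1}\kappa(z_i,z_{i+1})$ with $z_0 = x$, $z_{i} = x_{i}$ for $1\le i\le n$, and $z_{n+1}=y$. In particular, the expression $\widehat S(\kappa,\kappa)(x,y)$ is literally the infimum defining $\hat\kappa$ in formula (\ref{kappahat}).

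For the first assertion, this observation reduces it to a restatement of Lemma \ref{uniquelargest}: the unique largest distance below $\kappa$ produced there coincides pointwise with $\widehat S(\kappa,\kappa)$. Alternatively, one may combine Lemma \ref{1kapsighat} (applied with $\sigma=\kappa$) with the standard concatenation-of-chains argument to recover the triangle inequality for $\widehat S(\kappa,\kappa)$ directly, and then obtain maximality by bounding any distance $\mu\le\kappa$ along an arbitrary chain exactly as in the proof of Lemma \ref{uniquelargest}.

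For the second assertion, I would first note that $\lambda(x,y) = \kappa(x,y)\vee\kappa(y,x) = \lambda(y,x)$, so $\lambda$ is symmetric. Because $\lambda$ is symmetric, a chain $z_0=x,z_1,\dots,z_n=y$ and its reversal $z_n,z_{n-1},\dots,z_0$ have identical $\lambda$-sums, and hence the infimum $\widehat S(\lambda,\lambda)(x,y)$ equals $\widehat S(\lambda,\lambda)(y,x)$; thus $\hat\lambda$ is symmetric. The maximality among symmetric distances is then automatic: any symmetric distance $\mu\le\lambda$ is in particular a distance $\le\lambda$, hence $\mu\le\hat\lambda$ by the first assertion.

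For the third assertion, the first equality $(\hat\kappa)_0 = \widehat S(\kappa,\kappa)_0$ is just the substitution $\hat\kappa = \widehat S(\kappa,\kappa)$ from the first assertion, and the final equality $\widehat S(\kappa_0,\kappa_0) = \widehat{(\kappa_0)}$ is the first assertion applied to $\kappa_0$. The real content is the middle equality $\widehat S(\kappa,\kappa)_0 = \widehat S(\kappa_0,\kappa_0)$. On the diagonal both sides vanish (the left by fiat, the right because the chain $z_0=z_1=x$ gives $\kappa_0(x,x)=0$). Off the diagonal, the inequality $\widehat S(\kappa_0,\kappa_0)(x,y) \le \widehat S(\kappa,\kappa)(x,y)$ is immediate since $\kappa_0\le\kappa$. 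For the reverse inequality — the one technical point of the whole proof — given any chain $z_0=x,\dots,z_n=y$, delete every index $i$ for which $z_i = z_{i+1}$; the deletions do not alter the $\kappa_0$-sum (the removed terms contribute $0$), while on the resulting repeat-free chain the $\kappa$-sum and the $\kappa_0$-sum agree term by term. Hence every $\kappa_0$-chain sum from $x$ to $y$ equals some $\kappa$-chain sum from $x$ to $y$, giving $\widehat S(\kappa,\kappa)(x,y)\le \widehat S(\kappa_0,\kappa_0)(x,y)$. The entire corollary is essentially bookkeeping once Lemmas \ref{uniquelargest} and \ref{1kapsighat} are available; the only step that is not purely formal is this collapsing argument, and even there the verification is elementary.
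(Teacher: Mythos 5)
Your proposal is correct and follows the same route the paper intends: the paper offers no explicit proof, presenting the corollary as a direct restatement of Lemma \ref{uniquelargest} once one observes that $S(\kappa,\kappa)(x,p,y)$ is exactly a chain sum, so $\widehat S(\kappa,\kappa)$ coincides with the $\hat\kappa$ of (\ref{kappahat}). Your write-up simply supplies the details the paper leaves implicit — the reversal argument for symmetry of $\hat\lambda$ and the collapsing-of-repeated-points argument for $\widehat S(\kappa,\kappa)_0 = \widehat S(\kappa_0,\kappa_0)$ — and both are carried out correctly.
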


Now let $\kappa \domin_{_R} \sigma$,  let $\emptyset \ne p = (x_1, \cdots, x_n) \in X_{\infty}$, and let $x, y \in X$.  Then 
\[
S (\kappa, p, \sigma) \ge \kappa (x, x_n) + \sigma (x_n, y) \ge \kappa (x, y)
\]

\noindent so that  
\begin{equation}
\label{kapsighat}
\widehat S (\kappa, \sigma) (x, y) = \inf \left\{\kappa (x, x^{\prime}) + \sigma (x^{\prime}, y) \bigm\vert x^{\prime} \in X\right\} \ge \kappa (x, y),
\end{equation}

\noindent for all $x, y \in X$.  It  follows from (\ref{almost}) and (\ref{kapsighat}) that if $\kappa \domin_{_R} \sigma$, then $\kappa =  \widehat S (\kappa, \sigma)$.  Also, it is clear that if $\f \domin_{_R} \sigma$ and $\f \le \kappa$, then $\f = \widehat S (\f, \sigma)$.  We collect these in the following lemma.

\begin{lemma}
\label{kaphat}
Let   $\kappa, \sigma \in \K^+ (X)$.   Then there exists a kernel $\widehat S (\kappa, \sigma) \in  \K^+ (X)$ satisfying the following properties:
\begin{enumerate}
\item $\widehat S (\kappa, \sigma) \domin_{_R} \sigma$.
\item  $\widehat S (\kappa, \sigma) = \sup \left\{\f \bigm\vert \f \le \kappa {\text{\rm{ and }}} \f \domin_{_R} \sigma\right\}   \le \kappa$.
\item If $\kappa \domin_{_R} \sigma$, then $\kappa = \widehat S (\kappa, \sigma)$.
\item $\widehat S \bigl(\widehat S (\kappa, \sigma), \sigma\bigr) = \widehat S (\kappa, \sigma) = \widehat S (\kappa, \hat\sigma)$. 
\item $\kappa \domin_{_R} \sigma$ if and only if $\kappa \domin_{_R}  \hat\sigma$.
\end{enumerate}
\end{lemma}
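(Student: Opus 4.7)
My plan is to treat the five statements in the order they are given, since each one either directly uses or quickly follows from the preceding items together with Lemma \ref{1kapsighat}. Property (1) is exactly the content of Lemma \ref{1kapsighat}, so nothing is left to do there. For (2), the inequality $\widehat S(\kappa,\sigma)\le\kappa$ is immediate by taking $p=\emptyset$ in the infimum (\ref{almost}). Combined with (1), this shows that $\widehat S(\kappa,\sigma)$ itself belongs to the set whose supremum is being claimed. To check that it is an upper bound for that set, I would take any $\f\le\kappa$ with $\f\domin_{_R}\sigma$ and any chain $p=(x_1,\ldots,x_n)\in X_\infty$, and iterate the right-domination of $\f$ by $\sigma$ from the right end toward the left:
\[
\f(x,y)\le\f(x,x_n)+\sigma(x_n,y)\le\cdots\le\f(x,x_1)+\sum_{i=1}^{n-1}\sigma(x_i,x_{i+1})+\sigma(x_n,y).
\]
Since $\f(x,x_1)\le\kappa(x,x_1)$, the right-hand side is bounded by $S(\kappa,\sigma)(x,p,y)$, and taking the infimum over $p$ yields $\f\le\widehat S(\kappa,\sigma)$. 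Item (3) is then an immediate consequence: if $\kappa\domin_{_R}\sigma$, then $\kappa$ itself is a candidate in the supremum in (2), forcing $\kappa\le\widehat S(\kappa,\sigma)\le\kappa$.

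For (4), the first equality is quick: by (1), $\widehat S(\kappa,\sigma)\domin_{_R}\sigma$, so (3) applied with $\kappa$ replaced by $\widehat S(\kappa,\sigma)$ gives $\widehat S\bigl(\widehat S(\kappa,\sigma),\sigma\bigr)=\widehat S(\kappa,\sigma)$. The subtle part, and the step I expect to be the main obstacle, is the equality $\widehat S(\kappa,\sigma)=\widehat S(\kappa,\hat\sigma)$. One direction, $\widehat S(\kappa,\hat\sigma)\le\widehat S(\kappa,\sigma)$, follows from monotonicity of the sums $S(\kappa,-)$ in the second argument combined with $\hat\sigma\le\sigma$. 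For the reverse inequality I would argue, via (2) applied to $\hat\sigma$, that it suffices to show $\widehat S(\kappa,\sigma)\domin_{_R}\hat\sigma$. To obtain this I pick any chain $y=z_0,z_1,\ldots,z_n=z$ and iterate the right-domination of $\widehat S(\kappa,\sigma)$ by $\sigma$ from (1):
\[
\widehat S(\kappa,\sigma)(x,z)\le\widehat S(\kappa,\sigma)(x,z_{n-1})+\sigma(z_{n-1},z_n)\le\cdots\le\widehat S(\kappa,\sigma)(x,y)+\sum_{i=0}^{n-1}\sigma(z_i,z_{i+1}).
\]
Taking the infimum of $\sum\sigma(z_i,z_{i+1})$ over all such chains produces $\hat\sigma(y,z)$ on the right, which is exactly the right-domination of $\widehat S(\kappa,\sigma)$ by $\hat\sigma$.

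Finally, for (5), the implication $\kappa\domin_{_R}\hat\sigma\Rightarrow\kappa\domin_{_R}\sigma$ is trivial since $\hat\sigma\le\sigma$. The converse uses the same telescoping argument as above: if $\kappa\domin_{_R}\sigma$, then for any chain $y=z_0,\ldots,z_n=z$ the iterated inequality $\kappa(x,z)\le\kappa(x,y)+\sum_{i=0}^{n-1}\sigma(z_i,z_{i+1})$ holds, and passing to the infimum over chains recovers $\kappa(x,z)\le\kappa(x,y)+\hat\sigma(y,z)$. Alternatively, one can give a one-line derivation by combining (3) and (4): from $\kappa\domin_{_R}\sigma$ we get $\kappa=\widehat S(\kappa,\sigma)=\widehat S(\kappa,\hat\sigma)$, and the right-most kernel right-dominates $\hat\sigma$ by (1). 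Throughout, the only real work is the telescoping argument, which recurs in (2), (4), and (5); once it is set up carefully the rest is bookkeeping.
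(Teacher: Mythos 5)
Your proposal is correct and follows essentially the same route as the paper: property (1) is Lemma \ref{1kapsighat}, and properties (2) and (3) come from the same chain-telescoping computation the paper carries out just before stating the lemma (the display culminating in (\ref{kapsighat})). The paper leaves (4) and (5) unargued ("we collect these"), and your telescoping over chains from $y$ to $z$, followed by passing to the infimum to produce $\hat\sigma(y,z)$, is the natural and correct way to fill in exactly those two items.
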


Next, given $q > 0$, we  say that a  kernel $\kappa \in \mathcal K^+ (X)$ is a $q$-{\bf{almost distance}} provided that for all $x, y, z \in X$  we have    
\begin{equation}
\label{almostdist}
 \kappa  (x, z)  \le  q \, \bigl[\kappa (x, y) + \kappa (y, z)\bigr]. 
\end{equation}

\noindent   We say that $\kappa$ is an  almost distance  provided that it is a $q$-almost distance for some $0 < q$.   

\begin{lemma}
\label{bldisaldis}
Let   $\kappa, \sigma \in \K^+ (X)$ satisfy $\ell \,  \sigma \le \kappa \le L \, \sigma$ for some $0 < \ell \le L$ with $\sigma$  a distance.
\begin{enumerate}
\item The kernel $\kappa$ is an $(L/\ell)$-almost distance.
\item  If $L < 1$, then the kernel $\sigma - \kappa$ is a $(1 - \ell)/ (1 - L)$-almost distance.
\end{enumerate}
\end{lemma}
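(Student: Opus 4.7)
The plan is that both parts are straightforward consequences of manipulating the two-sided bound $\ell\sigma \le \kappa \le L\sigma$ together with the triangle inequality for the distance $\sigma$, and part (2) will in fact be deduced as a corollary of part (1) applied to a related kernel.

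For part (1), given $x,y,z \in X$, I would first apply $\kappa \le L\sigma$ to get $\kappa(x,z) \le L\,\sigma(x,z)$. Since $\sigma$ is a distance, the triangle inequality yields
\[
\kappa(x,z) \le L\,\sigma(x,z) \le L\bigl[\sigma(x,y) + \sigma(y,z)\bigr].
\]
Now I use the other half of the bound, $\ell\sigma \le \kappa$, in the form $\sigma(x,y) \le \kappa(x,y)/\ell$ and $\sigma(y,z) \le \kappa(y,z)/\ell$, to conclude
\[
\kappa(x,z) \le \frac{L}{\ell}\bigl[\kappa(x,y) + \kappa(y,z)\bigr],
\]
which is exactly (\ref{almostdist}) with constant $q = L/\ell$.

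For part (2), the key observation is that when $L < 1$, the kernel $\tau = \sigma - \kappa$ is non-negative and satisfies the analogous two-sided bound
\[
(1 - L)\,\sigma \le \sigma - \kappa \le (1 - \ell)\,\sigma,
\]
where $0 < 1 - L \le 1 - \ell$ since $\ell \le L < 1$. Thus $\tau$ sits between $\ell'\sigma$ and $L'\sigma$ with $\ell' = 1 - L$ and $L' = 1 - \ell$, and $\sigma$ is still the same distance. Applying part (1) to $\tau$ immediately gives that $\sigma - \kappa$ is an $(L'/\ell')$-almost distance, i.e. a $(1 - \ell)/(1 - L)$-almost distance.

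I do not anticipate any real obstacle; the only points worth stating carefully are (a) that the assumption $L < 1$ is used both to ensure $\sigma - \kappa \ge 0$ and to make the denominator $1 - L$ strictly positive, and (b) that $\ell \le L$ gives the correct ordering $\ell' \le L'$ needed to invoke part (1).
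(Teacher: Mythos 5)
Your proposal is correct and follows essentially the same route as the paper: part (1) is the direct chain $\kappa(x,z)\le L\,\sigma(x,z)\le L[\sigma(x,y)+\sigma(y,z)]\le (L/\ell)[\kappa(x,y)+\kappa(y,z)]$, and part (2) is obtained from the sandwich $(1-L)\sigma\le\sigma-\kappa\le(1-\ell)\sigma$ (the paper writes out this same computation explicitly rather than citing part (1), but it is the identical argument). Your explicit remarks on why $L<1$ and $\ell\le L$ are needed are accurate and, if anything, slightly more careful than the paper's own write-up.
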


\begin{proof}    Statement (1) is a direct consequence of definitions.    For $(2)$, let $\delta = \sigma - \kappa$.  Then $(1 - L) \,\sigma \le \delta \le (1 - \ell) \, \sigma$, and for all $x, y, z \in X$ we have
\[\aligned
\delta (x, z) & \le (1 - \ell)\, \sigma (x, z) \le (1 - \ell) \bigl[\sigma (x, y) + \sigma (y, z)\bigr]\\
&\le \frac{1 - \ell}{1 - L} \bigl[\delta (x, y) + \delta (y, z)\bigr].
\endaligned
\]
\end{proof}

 Obviously, a distance  is a $q$-almost distance for every $1 \le q$.  Also,    if $\kappa$ is  constant on $X \times X$, then  it satisfies (\ref{almostdist})  for all $q \ge 1/2$.  As in the case of distances,  the sum,  the maximum, or a convex combination of  two   almost distances  is again an almost distance.   Also,  if $\kappa$ is a $q$-almost distance, then for $p \ge 1$ the kernel $\kappa^{1/p}$ is a $q^{1/p}$-almost distance.  Moreover,  given  $\kappa \le \sigma \le \tau  \in \K^+ (X)$, if $\tau - \sigma$ is $q_1$-almost distance and $\sigma - \kappa$ is $q_2$-almost distance, then $\tau - \kappa$ is $[q_1 \vee q_2]$-almost distance.
The following lemma is elementary.

\begin{lemma}
\label{aldistdom}
 Let   $\kappa \in \K^+ (X)$.
 \begin{enumerate}
 \item If $\kappa \domin_{_R}  q \kappa$ and $\kappa \domin_{_L}  q  \kappa$ for some $q > 0$, then $\kappa$ is a $(q + 1)/2$-almost distance.
 \item If $\kappa$ is a $q$-almost distance for some $q > 1/2$, then $\kappa \domin_{_R}  (2q - 1) \kappa$ or $\kappa \domin_{_L}  (2q - 1)  \kappa$.
  \item  The kernel $\kappa$ is a distance if and only if $\kappa \domin_{_R} \kappa$ or $\kappa \domin_{_L} \kappa$.
 \item    Let $\kappa$ be a $q$-almost distance for some $q \ge 1/2$.  Then for each triple of points $x, y, z \in X$ we have   
 \begin{equation}
\label{aldist1}
 \bigl\vert\kappa (x, z) - \kappa (y, z)\bigr\vert \le 2 q \,[\kappa (x, y) \vee \kappa (y, x)] 
\end{equation} 

\noindent or  
\begin{equation}
\label{aldist2}
 \bigl\vert\kappa (x, z) - \kappa (x, y)\bigr\vert \le 2 q \,[\kappa (z, y) \vee \kappa (y, z)].
\end{equation}
 \end{enumerate}
\end{lemma}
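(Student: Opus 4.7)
The plan is to dispatch (1) and (3) by inspection, handle (2) by a one-line identity, and save the casework for (4).

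For (1), I would add the right and left dominance inequalities $\kappa(x,z)\le\kappa(x,y)+q\kappa(y,z)$ and $\kappa(x,z)\le q\kappa(x,y)+\kappa(y,z)$ and divide by two; the result is exactly the $(q+1)/2$-almost distance inequality. For (3), each of $\kappa\domin_{_R}\kappa$ and $\kappa\domin_{_L}\kappa$ literally asserts the triangle inequality, so together with the built-in nonnegativity of kernels in $\K^+(X)$, either relation is equivalent to $\kappa$ being a distance.

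For (2), observe that the two candidate bounds $\kappa(x,y)+(2q-1)\kappa(y,z)$ and $(2q-1)\kappa(x,y)+\kappa(y,z)$ sum to $2q[\kappa(x,y)+\kappa(y,z)]$, so their maximum is at least $q[\kappa(x,y)+\kappa(y,z)]$. Since the $q$-almost distance hypothesis gives $\kappa(x,z)\le q[\kappa(x,y)+\kappa(y,z)]$, at least one of the two candidates already bounds $\kappa(x,z)$, so the corresponding $R$- or $L$-dominance with constant $2q-1$ holds at that triple.

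Part (4) is where the real work lies. I would argue by contradiction, assuming neither (\ref{aldist1}) nor (\ref{aldist2}) holds at $(x,y,z)$. The two absolute values break each failed inequality into a definite sign, producing four sign-combinations. In each combination I would weaken each maximum via $\kappa(x,y)\vee\kappa(y,x)\ge\kappa(x,y)$ and $\kappa(y,z)\vee\kappa(z,y)\ge\kappa(y,z)$, keeping the arguments that appear in the $q$-almost distance chain $\kappa(x,z)\le q[\kappa(x,y)+\kappa(y,z)]$. Then, in the two combinations where $\kappa(x,z)$ sits on the same side of both failures, I would add them (invoking $\kappa(x,z)\le q[\kappa(x,y)+\kappa(y,z)]$ in the sub-case where both failures give lower bounds on $\kappa(x,z)$); in the two mixed-sign combinations I would instead substitute one failed inequality into the other. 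In every sub-case the resulting inequality asserts that a nonpositive multiple of a nonnegative quantity strictly exceeds a nonnegative quantity, which is incompatible with $q\ge 1/2$. The main obstacle is the bookkeeping in choosing the right term of each max and the right combination step for each sub-case; once that is settled, $q\ge 1/2$ does all the work.
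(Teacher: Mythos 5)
The paper gives no proof of this lemma (it is dismissed with ``The following lemma is elementary''), so there is nothing to compare your route against; judged on its own, your proposal is essentially correct and complete in outline. Parts (1) and (3) are exactly as you say, and the averaging trick for (2) is clean. I verified that the case analysis in (4) closes as you describe: writing $a=\kappa(x,y)$, $a'=\kappa(y,x)$, $b=\kappa(y,z)$, $b'=\kappa(z,y)$, $c=\kappa(x,z)$ and weakening the maxima to $a$ and $b$, the case $c\ge a$, $c\ge b$ adds the two failed inequalities to get $2c>(2q+1)(a+b)$ against $c\le q(a+b)$; the case $c<a$, $c<b$ adds them to get $0\le 2c<(1-2q)(a+b)\le 0$; and the two mixed cases substitute one failure into the other to give $c(1-2q)>(1+4q^2)\,b$ and $b(1-2q)>(1+2q)\,a$, each impossible for $q\ge 1/2$.

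The one point you should make explicit concerns the quantifier in (2). Your argument (correctly) yields the disjunction \emph{triple by triple} --- you even write ``holds at that triple'' --- but $\domin_{_R}$ and $\domin_{_L}$ are defined by quantifying over all triples, so the statement as literally written asserts a global disjunction, and that global version is false. For example, on $X=\{x,y,z\}$ the symmetric kernel with $\kappa(x,y)=1$, $\kappa(y,z)=1/10$, $\kappa(x,z)=3/2$ and zero diagonal is a $2$-almost distance, yet $\kappa(x,z)\le\kappa(x,y)+3\,\kappa(y,z)$ fails at $(x,y,z)$ and $\kappa(z,x)\le 3\,\kappa(z,y)+\kappa(y,x)$ fails at $(z,y,x)$, so neither $\kappa\domin_{_R}3\kappa$ nor $\kappa\domin_{_L}3\kappa$ holds globally. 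The pointwise reading --- parallel to the explicitly pointwise phrasing of (4) and to the ``word of caution'' following the lemma --- is the only tenable one, and your proof should say that this is what it establishes rather than leaving the quantifier implicit.
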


A word of caution is in order here.  It is possible that for a given triple of points $x, y, z \in X$ only one (but not both) of (\ref{aldist1}) or (\ref{aldist2}) holds, even if $\kappa$ is symmetric.  For instance, let $X = \{x, y, z\}$ with $\kappa (x, z) = \kappa (z, x) = 2$, $\kappa (y, z) = \kappa (z, y) = 1$, and $\kappa (x, y) = \kappa (y,x) = 0$.  Then $\kappa$ is a $2$-almost distance which satisfies (\ref{aldist2}) but not (\ref{aldist1}).  Furthermore,  a kernel $\kappa \in \K^+ (X)$ satisfying both (\ref{aldist1}) and  (\ref{aldist2}) for all $x, y, z \in X$ is not necessarily an almost distance.   This motivates the following definition.  We say that  $\kappa \in \K^+ (X)$ is a {\bf uniform  $q$-almost distance}  for some $q \ge 1/2$ provided that $\kappa$ is a $q$-almost distance and {\it{both}} (\ref{aldist1}) and (\ref{aldist2}) hold for  {\it{every}}  $x, y, z \in X$.  We note that for a symmetric uniform $q$-almost distance, the two inequalities (\ref{aldist1}) and (\ref{aldist2}) are equivalent.  Moreover, if a kernel $\kappa \in \K^+ (X)$ is symmetric and satisfies both (\ref{aldist1}) and  (\ref{aldist2}) for all $x, y, z \in X$, then it is a $(q +1)/2$-almost distance, and hence, a uniform almost distance.


\section{Bridges}  

   If there exist metrics $d_X$ and $d_Y$ on two sets $X$ and $Y$, respectively, which agree on $X \cap Y$, then  we may define a metric $d$ on $X \cup Y$ by setting $d\big\vert_X = d_X$, $d\bigl\vert_Y = d_Y$, and  
\begin{equation}
\label{union}
d (x, y) = \inf \left\{ d_X (x, z) + d_Y (z, y) \bigm\vert z \in X \cap Y\right\}.
\end{equation}

\noindent In the  case that $X \cap Y = \emptyset$, (\ref{union}) becomes
\[
d (x, y) = \inf \left\{ d_X (x, z) + d_Y (z, y) \bigm\vert z \in \emptyset\right\} = +\,  \infty.\]

\noindent In this case, of course, $d$ is not a metric.   If $X \cap Y = \emptyset$  and $\diam (X, d_X) \vee \diam (Y, d_Y) < \infty$, then $d (x, y) = \diam (X, d_X) \vee \diam (Y, d_Y)$ works. But, what about the general case?

\vskip5pt

As pointed out in \cite{flood}, this is the reason that the category of metric spaces and contractions does not have coproducts.  One way to overcome this shortcoming is to enlarge the category to include the  extended metric\footnote{The metric takes values in $[0, \infty) \cup \{\infty\}$.}  spaces and set $d (x, y) = +  \infty$ for $x \in X$ and $y \in Y$.  Another approach  is to attach a  {\bf norm function}  to each metric space:  Following  \cite{flood} and \cite{pestov}, a  norm function  on a metric space $(X, d_X)$ is a function $f : X \aro \R$ satisfying
\begin{equation}
\label{norm}
\bigl\vert f (x_1) - f (x_2)\bigr\vert  \le d _X (x_1, x_2) \le f (x_1) + f (x_2),\end{equation}

\noindent for all $x_1, x_2 \in X$.  It follows immediately that  such   $f \ge 0$ and  that, for  $x \in X$,
\[
f (x) = \sup \,  \bigl\{d_X (x, z) - f (z) \bigm\vert z \in X\bigr\}.
\]
  For instance, for each $x \in X$, the function $d_X (x, -)$ is a norm function on $(X, d_X)$. Choosing a norm function $f$ on $(X, d_X)$ is similar to attaching a point $\ast$ to $X$  and setting $d (x, \ast) = f (x)$.   Strictly speaking, one first identifies $X$ with its isometric image inside the injective  hull of $X$, \cite{isbell}  and \cite{lang}, and then attaches the point $\ast = f$ to it.  In this case, the triple $(X, d_X, f)$ is called a  normed metric space in \cite{flood} and a  normed set  in \cite{pestov}.  

\vskip5pt

Now let $(X, d_X, f)$ and $(Y, d_Y, g)$ be normed sets.  Then the triple  $(X\amalg Y, d, f\amalg g)$, where  $d\big\vert_X = d_X$, $d\big\vert_Y = d_Y$,  $d (x, y) = f (x) + g (y)$, and $(f \amalg g) (x) + (f \amalg g) (y) = f(x) + g (y)$  is also a normed set.  In a similar way, the triple $(X \times Y, d = d_X + d_Y, f \otimes g)$, where $(f \otimes g) (x, y) = f (x) + g (y)$, is again  a normed set.  Indeed, it is shown in \cite{flood} that the category of normed sets and contractions is small complete and small cocomplete.  More generally, it is shown in \cite{pestov} that this category   is both complete and  cocomplete.

\vskip5pt

What about other norm functions on $X \times Y$ or $X \amalg Y$?  For instance, if $\omega$ is norm function on $(X \times Y, d_X + d_Y)$, it must satisfy 
\begin{equation}
\label{phionprod}
\bigl\vert\omega (x, y) - \omega (x^{\prime}, y^{\prime})\bigr\vert \le d_X (x, x^{\prime}) + d_Y (y, y^{\prime}) \le \omega (x, y) + \omega (x^{\prime}, y^{\prime}).
\end{equation}

\noindent The difficulty, in general, is with the right hand side of (\ref{phionprod}). The  construction in the above paragraph works because $\omega (x, y) = f(x) + g (y)$ which is a very special case.

\vskip5pt

Here is a more general construction.  Let $X$ and $Y$ be sets.   A {\bf bridge} on the disjoint union $X \amalg Y$ is a function $F: X \times Y \aro \R$ satisfying
\begin{equation}
\label{bridge}
F (x_1, y_1) - F (x_2, y_2)  \le F (x_1, y_2) + F (x_2, y_1)
\end{equation}

\noindent for all $x_1, x_2 \in X$ and all $y_1, y_2 \in Y$.  It is clear that a bridge $F \ge 0$.

\vskip5pt

A bridge $F$ on $X \amalg Y$ induces two non-negative symmetric kernels $\kappa_{_X}$ and $\kappa_{_Y}$ on $X$ and $Y$, respectively, by setting
\[
\kappa_{_X} (x_1, x_2) = \inf \left\{F(x_1, y) + F (x_2, y) \bigm\vert y \in Y\right\}
\]
and
\[
\kappa_{_Y} (y_1, y_2) = \inf \left\{F(x, y_1) + F (x, y_2) \bigm\vert x \in X\right\}.
\]
Then, by Lemma \ref{uniquelargest}, there exist unique  symmetric distances $\hat\kappa_{_X} \le \kappa_{_X}$ and  $\hat\kappa_{_Y} \le \kappa_{_Y}$, respectively.  The following lemma is clear.{\footnote{For a very simple example, let $c > 0$. Then the constant function $F (x, y) = c$  is a bridge on $X \amalg Y$ inducing the symmetric distance $\kappa (x_1, x_2) = 2 c = \kappa (y_1, y_2)$ and $\kappa (x, y) = c = \kappa (y, x)$.}

\begin{lemma}
\label{BridgeDist}
Let $X$ and $Y$ be sets and let $F: X \times Y \aro \R$ be a bridge on the disjoint union   $X \amalg Y$.  Then
\[
\kappa  = \hat\kappa_{_X} \amalg F \amalg {}^t F \amalg \hat\kappa_{_Y}
\]
is a symmetric distance on   $X \amalg Y$.
\end{lemma}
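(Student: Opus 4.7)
\medskip

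\noindent\textbf{Proof proposal.} The kernel $\kappa$ on $X\amalg Y$ is defined piecewise by $\kappa\bigl\vert_{X\times X} = \hat\kappa_{_X}$, $\kappa\bigl\vert_{Y\times Y} = \hat\kappa_{_Y}$, $\kappa\bigl\vert_{X\times Y} = F$, and $\kappa\bigl\vert_{Y\times X} = {}^t F$. Symmetry is immediate: $\hat\kappa_{_X}$ and $\hat\kappa_{_Y}$ are symmetric by construction (as distances below the manifestly symmetric kernels $\kappa_{_X}, \kappa_{_Y}$), while $\kappa(x,y) = F(x,y) = ({}^t F)(y,x) = \kappa(y,x)$ for mixed pairs. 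Non-negativity likewise holds pointwise, since $F\ge 0$ for any bridge and $\hat\kappa_{_X}, \hat\kappa_{_Y}$ are distances. Hence the only content is the triangle inequality $\kappa(p,r) \le \kappa(p,q) + \kappa(q,r)$, which I would verify by splitting into cases according to which summand $X$ or $Y$ each of $p, q, r$ belongs to.

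\medskip

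The easy cases are: (i) all three points in $X$ (resp.\ all in $Y$), which is the triangle inequality for $\hat\kappa_{_X}$ (resp.\ $\hat\kappa_{_Y}$); and (ii) endpoints in one set, middle in the other, say $p, r \in X$ and $q \in Y$, where one needs $\hat\kappa_{_X}(p,r) \le F(p,q) + F(r,q)$. This follows from $\hat\kappa_{_X} \le \kappa_{_X}$ together with $\kappa_{_X}(p,r) \le F(p,q) + F(r,q)$, the latter being immediate from the defining infimum of $\kappa_{_X}$. The case $p, r \in Y$, $q \in X$ is symmetric.

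\medskip

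The substantive case is when two of the points lie in one summand and the third in the other, say $p, q \in X$ and $r \in Y$; the target inequality is
\[
F(p,r) \;\le\; \hat\kappa_{_X}(p,q) + F(q,r).
\]
The key input is the bridge inequality (\ref{bridge}) applied with $x_1 = p$, $x_2 = q$, $y_1 = r$, and an arbitrary $y_2 = y \in Y$, which gives $F(p,r) - F(q,y) \le F(p,y) + F(q,r)$, i.e.\ $F(p,r) \le F(q,r) + F(p,y) + F(q,y)$ for all $y \in Y$. Taking the infimum over $y$ produces
\[
F(p,r) \;\le\; F(q,r) + \kappa_{_X}(p,q).
\]
This is only an estimate in terms of $\kappa_{_X}$, not $\hat\kappa_{_X}$, so a single bridge application does not suffice; one must iterate along chains. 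For any chain $p = z_0, z_1, \ldots, z_n = q$ in $X$, induction on $n$ with the displayed inequality applied at each step yields
\[
F(p,r) \;\le\; F(q,r) + \sum_{i=0}^{n-1}\kappa_{_X}(z_i, z_{i+1}).
\]
Taking the infimum over all such chains and invoking formula (\ref{kappahat}) for $\hat\kappa_{_X}$ then delivers $F(p,r) \le F(q,r) + \hat\kappa_{_X}(p,q)$, as required. The remaining three mixed cases ($p, q \in Y$, $r \in X$; $p \in Y$, $q, r \in X$; $p \in X$, $q, r \in Y$) follow either by interchanging the roles of $X$ and $Y$ or by using the symmetry $\kappa(p,r) = \kappa(r,p)$ already established.

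\medskip

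The main obstacle is precisely the need to pass from $\kappa_{_X}$ to $\hat\kappa_{_X}$ in the mixed triangle inequality; the chain iteration above is the technical heart of the argument, and it works because the bridge estimate is functorial enough to telescope along consecutive pairs of a chain.
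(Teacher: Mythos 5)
Your proof is correct. The paper itself offers no argument here --- it simply declares the lemma ``clear'' --- so there is no official proof to compare against; your case analysis supplies exactly the missing details. In particular you correctly identify the one step that is genuinely not immediate: a single application of the bridge inequality (\ref{bridge}) followed by the infimum over $y$ only yields $F(p,r)\le F(q,r)+\kappa_{_X}(p,q)$, and the upgrade to $\hat\kappa_{_X}(p,q)$ requires telescoping that estimate along an arbitrary chain $p=z_0,\dots,z_n=q$ and then invoking the chain formula (\ref{kappahat}); your induction does this correctly, and the remaining cases (all points in one summand, or the middle point in the opposite summand) reduce as you say to the triangle inequality for $\hat\kappa_{_X},\hat\kappa_{_Y}$ and to the defining infima of $\kappa_{_X},\kappa_{_Y}$.
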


 Given a bridge $F$ on $X \amalg Y$,   for each $x \in X$ the function $F (x, -) : Y \aro \R$ satisfies  
\[
F (x, y_1) - F (x, y_2) \le \kappa_{_Y} (y_1, y_2) \le  F (x, y_1) + F (x, y_2),
\]

\noindent a norm function on the set $(Y, \kappa_{_Y})$.
 Similarly, for each $y \in Y$, the function  $F (-, y) : X \aro \R$ satisfies 
\[
F (x_1, y) - F (x_2, y) \le  \kappa_{_X} (x_1, x_2) \le F (x_1, y) + F (x_2, y),
\]
 
\noindent a norm function on the set $(X, \kappa_{_X})$.

\vskip5pt

As mentioned above, if $(X, d_X, f)$ and $(Y, d_Y, g)$   are  normed sets, then the function $F (x, y) = f (x) + g (y)$ is a bridge on $X \amalg Y$.  We will refer to such a bridge as   Flood-Pestov bridge.  

\vskip5pt

Now let $X$ and $Y$ be sets and let $F : X \times Y \aro \R$ be a bridge on $X \amalg Y$.  Let $\left(x_{\circ}, y_{\circ}\right) \in X \times Y$ be fixed.  Then for $(x, y) \in X \times Y$ we have
\[
F (x, y) \le  F (x_{\circ}, y_{\circ})  +  F (x, y_{\circ}) + F (x_{\circ}, y).
\]

\noindent Setting $f (x) = F (x_{\circ}, y_{\circ})  +  F (x, y_{\circ})$ and $g (y) = F (x_{\circ}, y)$, it follows from simple calculation that the function $G (x, y) = f (x) + g (y)$ if a Flood-Pestov bridge on $X \amalg Y$ with $F (x, y) \le G (x, y)$ for all $(x, y) \in X \times Y$.

\begin{lemma}
\label{F-P}
Let $X$ and $Y$ be sets and let $F: X \times Y \aro \R$ be a bridge on the disjoint union   $X \amalg Y$.  Then there exists a Flood-Pestov bridge  $G$  on   $X \amalg Y$ with $F (x, y) \le G (x, y)$ for all $(x, y) \in X \times Y$.
\end{lemma}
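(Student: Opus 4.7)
The plan is to formalize the construction sketched in the paragraph immediately preceding the statement, which already contains the whole argument in embryonic form.

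First, I would fix a basepoint $(x_\circ, y_\circ) \in X \times Y$ and apply the bridge inequality \eqref{bridge} with $x_1 = x$, $y_1 = y$, $x_2 = x_\circ$, $y_2 = y_\circ$ to obtain
\[
F(x, y) \le F(x_\circ, y_\circ) + F(x, y_\circ) + F(x_\circ, y)
\]
for every $(x, y) \in X \times Y$.

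Next I would set $f(x) := F(x_\circ, y_\circ) + F(x, y_\circ)$ on $X$ and $g(y) := F(x_\circ, y)$ on $Y$. Taking $x_1 = x_2$ and $y_1 = y_2$ in the bridge inequality yields $F \ge 0$, hence both $f \ge 0$ and $g \ge 0$; setting $G(x, y) := f(x) + g(y)$, the displayed inequality immediately gives $F(x, y) \le G(x, y)$.

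To certify that $G$ is actually a Flood--Pestov bridge, I still have to produce metrics on $X$ and $Y$ with respect to which $f$ and $g$ are norm functions in the sense of \eqref{norm}. The quickest choice is the radial metric $d_X(x_1, x_2) = f(x_1) + f(x_2)$ when $x_1 \ne x_2$ and $d_X(x, x) = 0$ (and the analogous $d_Y$). A routine check using only $f, g \ge 0$ verifies the triangle inequality for $d_X$ and the sandwich $|f(x_1) - f(x_2)| \le d_X(x_1, x_2) \le f(x_1) + f(x_2)$, so $(X, d_X, f)$ and $(Y, d_Y, g)$ are normed sets, and $G = f + g$ is by definition a Flood--Pestov bridge dominating $F$.

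No step is genuinely difficult; the only observation to isolate is that the bridge inequality is ``additive enough'' to let us peel off the two coordinate slices $F(-, y_\circ)$ and $F(x_\circ, -)$ and recombine them into an additive bridge, with the term $F(x_\circ, y_\circ)$ supplying exactly the slack the bridge inequality demands.
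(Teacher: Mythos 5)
Your proposal is correct and follows exactly the paper's own argument: the same basepoint inequality $F(x,y)\le F(x_\circ,y_\circ)+F(x,y_\circ)+F(x_\circ,y)$, the same choices $f(x)=F(x_\circ,y_\circ)+F(x,y_\circ)$ and $g(y)=F(x_\circ,y)$, and the same conclusion $F\le G=f+g$. Your extra step of certifying the normed-set structure via the radial kernel $d_X=f(x_1)+f(x_2)$ is a reasonable way to fill in what the paper dismisses as a ``simple calculation'' (note it is only a pseudometric if $f$ vanishes at two distinct points; alternatively one can use the distances $\hat\kappa_{_X}$, $\hat\kappa_{_Y}$ already induced by the bridge, with respect to which $F(-,y_\circ)$ and $F(x_\circ,-)$ were shown to be norm functions).
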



\section{Kernels on a probability space}

In this section we extend some some of the results for metrics in \cite{errbilip} to the case of distances.

\vskip5pt

Let  $(X, \mu)$  be a probability  space  and let $\kappa \in \K (X)$.   Then  for a Lebesgue measurable subset $A \subset \R$ and $x \in X$, the sets $\kappa (x, -)^{- 1} (A)$ and $\kappa (-, x)^{- 1} (A)$ are $\mu$-measurable.  Consequently, $\kappa: (X \times X, \mu \times \mu) \aro  (\R, \,  {\text{Borel}} \, \sigma{\text{-algebra}})$ implying that the topology  $\T_{\kappa}$ is contained in the $\sigma$-algebra of $\mu$-measurable subsets of $X$ so that $\T_{\kappa}$-Borel subsets of $X$ are $\mu$-measurable.  We say that $\kappa$ is $\mu$-{\bf{regular}} provided that every non-empty $\T_{\kappa}$-open set has positive $\mu$-measure.

\vskip5pt

We write $\K (\mu)  =  \K (X, \mu)$  for the linear subspace of $\K (X)$  consisting of all kernels on $X$ which are $\mu \times \mu$-measurable  and  satisfy    the Hilbert-Schmidt condition
\[
\int\int \kappa^2 (x, y)  d \mu (x)  d \mu (y) < \infty.
\]
	
\noindent   Then for $\mu$-almost every $x \in X$ the functions $\kappa (x, -)$ and $\kappa (-, x)$ are in $L^2 (\mu)$.   It is clear that there is a surjection $\xymatrix{\mathcal K (\mu) \ar@{->>}[r] & L^2 (\mu \times \mu)}$.  Moreover, the triple $\bigl(\K (\mu), +, \ast\bigr)$ is a ring where  the multiplication\footnote{That the multiplication $\ast$ is associative  follows from the theorem of   Fubini.}  $\ast: \K (\mu) \times \K (\mu) \aro  \K (\mu)$ is given by 
\begin{equation}
\label{star}
\aligned
(\kappa \ast \f) (x, y)  &= \int \kappa (x, z)\, \f (z, y) \, d \mu (z) \\
& = \langle\kappa (x, -), \f (-, y)\rangle,
\endaligned
\end{equation} 

\noindent  the inner product in $L^2 (\mu)$. Clearly,  if $\kappa$  is symmetric,   so is $\kappa \ast \kappa$.

\vskip5pt

The ring $\bigl(\K (\mu), +, \ast\bigr)$ acts on $L^2 (\mu)$ by  ``convolution'' as follows:  For $\kappa \in \K (\mu)$ and $f \in L^2 (\mu)$ we have 
\begin{equation}
\label{action}
(\kappa \centerdot f) (x) = \int \kappa (x, z) f (z) \, d \mu (z)  
=  \langle\kappa (x, -),  f \rangle.
\end{equation}

\noindent If $(X, \mu)$ is a measurable group  \cite[Section 59]{halmos}  and $f \in L^2 (\mu)$, we let $\kappa (x, y) = f (x y^{- 1})$.  Then for  $g \in L^2 (\mu)$, the function $\kappa \centerdot g$ is just the usual convolution of $f$ and $g$.  In particular, if $X$ is  abelian and locally compact with $\mu$ the Haar measure,  then 
\[(\kappa \centerdot g)(x) = \int f (x - y)\,  g (y) \, d \mu (y).\]

Let $\K_0 (\mu) = \K (\mu)  \cap \K_0 (X)$  and $\K^+ (\mu) = \K (\mu)  \cap \K^+ (X)$.  We  recall that each $\kappa \in \K_0 (\mu)$ induces a canonical map $\iota_{\kappa}: X \aro L^2 (\mu)$ by setting $\iota_{\kappa} (x) = \kappa (x, -)$.   For a  weak metric $\kappa \in \K_0 (\mu)$ we have
\[
\bigl\vert\Vert\iota_{\kappa}\Vert^2_2 - \kappa (x, y) \, \Vert\iota_{\kappa}\Vert_1 \, \bigr\vert \le (\kappa \ast \kappa) (x, y) \le \Vert\iota_{\kappa}\Vert^2_2  + \kappa (x, y) \, \Vert\iota_{\kappa}\Vert_1
\]

\noindent so that $(\kappa \ast \kappa) (x, x) = \Vert\iota_{\kappa} (x)\Vert^2_2 \ge 0$.  Moreover, if  $\kappa$ is a metric, then $\iota_{\kappa}$ is injective.

\vskip5pt

The canoincal map $\iota_{\kappa}$ induces a pseudometric  $\rho_{\kappa}$ on $X$ by setting   
\begin{equation}
\label{iota^ast}
\rho_{\kappa} (x, y) =  \iota_{\kappa}^{\ast}\bigl(\Vert\cdot - \cdot\Vert_2\bigr) (x, y) = \Vert\iota_{\kappa} (x) - \iota_{\kappa} (y)\Vert_2.
\end{equation}

\noindent  Clearly,  if $\kappa \in \mathcal K^+ (\mu)$  is a distance, then $\rho_{\kappa} (x, y) \le \kappa (x, y) \wedge \kappa (y, x)$ for all $x, y \in X$.  Moreover, if $\kappa$  is a pseudometric, then for every $x \in X$ both functions $\rho_{\kappa} (x, -): (X, \kappa) \aro [0, \infty)$ and $\rho_{\kappa} (-, x): (X, \kappa) \aro [0, \infty)$ are Lipschitz    with Lipschitz constant $ \le 1$.  Hence, in this case, all $\rho_{\kappa}$-Borel sets are  $\mu$-measurable.  In general, however, the difficulty with $\rho_{\kappa}$ is that we do not know whether $\rho_{\kappa}$-Borel sets are  $\mu$-measurable or not.  Interestingly,  for  any symmetric  kernel $\kappa \in \mathcal K^+ (\mu)$ we have
\[\aligned
Z (\kappa \ast \kappa) (x, y) &= (\kappa \ast \kappa) (x, y) - \Lambda (\kappa \ast \kappa) (x, y)\\
 &= - \frac{1}{2} \left\Vert\iota_{\kappa} (x) - \iota_{\kappa} (y)\right\Vert^2_2  =  - \frac{1}{2} \,\rho^2_{\kappa} (x, y),
\endaligned
\]

\noindent see Section 2.  The  following is an immediate corollary of Theorem \ref{ivt}.

\begin{cor}
\label{wowlem}
Let $\kappa \in \mathcal K^+ (\mu)\,  \cap \, \mathcal K_0 (\mu)$  be a distance and let $\sigma \in \K^+ (\mu)$. Then the function $\kappa: \left(X \times X, \rho_{\sigma} \oplus \rho_{\sigma}\right) \aro [0, \infty)$ is Lipschitz if and only if the  map $\iota_{\sigma} : (X, \kappa) \aro L^2 (\mu)$ is lower Lipschitz.  In this case,  we have  $\Lip (\kappa) = \Lip \bigl(\iota^{- 1}_{\sigma}: \iota_{\sigma} (X)  \aro X\bigr)$.
\end{cor}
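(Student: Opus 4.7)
The plan is simply to invoke Theorem \ref{ivt} with the right choice of data. Take the map $f = \iota_{\sigma} : X \to L^2(\mu)$, and let $(Y, d) = (L^2(\mu), \Vert \cdot - \cdot \Vert_2)$, which is a pseudometric space. The hypothesis that $\kappa \in \K^+ (\mu) \cap \K_0 (\mu)$ is a distance means exactly that $\kappa$ satisfies conditions (a), (b), (c) from the introduction, i.e.\ $\kappa$ is a weak metric in the sense required by Theorem \ref{ivt}.

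The only thing to check is that the pullback pseudometric $f^\ast d$ coincides with $\rho_\sigma$. But this is immediate from the definition (\ref{iota^ast}):
\[
(\iota_{\sigma}^{\ast} \Vert \cdot - \cdot \Vert_2)(x, y) = \Vert \iota_{\sigma}(x) - \iota_{\sigma}(y) \Vert_2 = \rho_\sigma (x, y).
\]
Consequently $f^\ast d \oplus f^\ast d = \rho_\sigma \oplus \rho_\sigma$ as pseudometrics on $X \times X$.

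With these identifications in place, Theorem \ref{ivt} applied to $f = \iota_{\sigma}$ says precisely that $\kappa : (X \times X, \rho_\sigma \oplus \rho_\sigma) \to [0, \infty)$ is Lipschitz if and only if $\iota_\sigma : (X, \kappa) \to L^2(\mu)$ is lower Lipschitz, with the equality $\Lip(\kappa) = \Lip(\iota_\sigma^{-1} : \iota_\sigma (X) \to X)$ inherited from the corresponding equality in Theorem \ref{ivt}. Since there is essentially no obstacle beyond a change of notation, the main (minor) point to take care of is simply noting that the target $L^2(\mu)$ qualifies as a pseudometric space so Theorem \ref{ivt} applies verbatim; no new estimates are required.
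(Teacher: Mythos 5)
Your proposal is correct and matches the paper exactly: the paper presents Corollary \ref{wowlem} as an immediate consequence of Theorem \ref{ivt}, with precisely the instantiation you describe ($f = \iota_{\sigma}$, target the pseudometric space $L^2(\mu)$, and the observation that $\iota_{\sigma}^{\ast}\bigl(\Vert\cdot - \cdot\Vert_2\bigr) = \rho_{\sigma}$ by definition (\ref{iota^ast})). Nothing further is needed.
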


   Now, when the action of the ring  $\bigl(\K (\mu), +, \ast\bigr)$  on $L^2 (\mu)$  is restricted to distances, more can be said.   Specifically, most of the results in \cite[Section 3]{errbilip} for metrics hold, almost verbatim,  for distances.  In detail, a distance  $\kappa$ induces a linear transformation $T_{\kappa}: L^2 (\mu) \aro L^2 (\mu)$ by setting 
$
\left(T_{\kappa} f \right) (x) =  (\kappa \centerdot f) (x) =  \langle\iota_{\kappa} (x), f\rangle,
$
\noindent for $ f \in L^2 (\mu)$.  Clearly $T_{\kappa}$ is a Hilbert-Schmidt operator and hence compact and self-adjoint.   Moreover, we have the commutative diagram
 
\begin{equation}
\label{k*k}
\xymatrix{ & L^2 (\mu) \ar[d]^{T_{\kappa}} \\
X \ar[ru]^{\iota_{\kappa}} \ar[r]_{\iota_{\kappa\ast\kappa}} & L^2 (\mu).}
\end{equation}

\noindent We next   write $\Lip_{\kappa} (X)$ for the class of all real-valued Lipschitz functions on $(X, \kappa)$; it is a normed linear space with the norm given by $\Vert f\Vert_{\Lip_{\kappa}} = \sup_{x \in X} \bigl\vert f (x)\bigr\vert \vee  \Lip_{\kappa} (f)$.   The distance  $\kappa$ induces a linear transformation $J_{\kappa}: L^2 (\mu) \aro \Lip_{\kappa} (X)$ by setting\footnote{The distinction between $T_{\kappa}$ and $J_{\kappa}$ is in the codomain.}
\[
\left(J_{\kappa} f \right) (x) = \int \kappa (x, z) f (z) d \mu (z)
\]

\noindent for $ f \in L^2 (\mu)$.  That $J_{\kappa} f \in \Lip_{\kappa} (X)$ follows from 
\[
\bigl\Vert J_{\kappa} f\bigr\Vert_{\Lip_{\kappa}}  \le \bigl[ 1 \vee \sup_{(x, y) \in X \times X} \kappa (x, y) \bigr] \bigl\Vert f\bigr\Vert_2
\]

\noindent so that the operator norm $\Vert J_{\kappa}\Vert \le   1 \vee \sup_{(x, y) \in X \times X} \kappa (x, y)   <  \infty$.    If $\kappa \in \mathcal K_0 (\mu)$ is a bounded   distance, the linear map $J_{\kappa}$ is  bounded, and thus, Lipschitz.  We have the commutative diagram 
\begin{equation}
\label{incl.compact}
\xymatrix{
 & \Lip_{\kappa} (X) \ar@{^{(}->}[d] \\
L^2 (\mu) \ar[ru]^{J_{\kappa}} \ar[r]^{T_{\kappa}} & L^2 (\mu).
}
\end{equation}

\noindent It is  well known that the evaluation map $\ev: X \aro \Lip^{\ast}_{\kappa} (X)$ is a bi-Lipschitz (in fact, isometric when $\kappa$ is symmetric; see \cite{pestov}) embedding.    Letting $\eta: L^2 (\mu) \aro L^2 (X)^{\ast}$ denote the isomorphism $\eta (f) = \langle f, - \rangle$, we have 
\begin{equation}
\label{ev&eta}
\bigl(\eta \circ \iota_{\kappa}\bigr) (x) (f)   = \bigl( J^{\ast}_{\kappa} \circ \ev\bigr) (x) (f),
\end{equation}

\noindent where  $J^{\ast}_{\kappa}: \Lip^{\ast}_{\kappa} (X) \aro L^2 (\mu)^{\ast}$ is the adjoint of $J_{\kappa}$.  Since (\ref{ev&eta}) holds for every $x \in X$ and every $f \in L^2 (\mu)$, we have the commutative diagram 
\begin{equation}
\label{CanEmbEv}
\xymatrix{
X \ar[r]^{\iota_{\kappa}} \ar@{^{(}->}[d]_{\ev} & L^2 (\mu) \ar[d]^{\eta}\\
 \Lip^{\ast}_{\kappa} (X) \ar[r]^{J_{\kappa}^{\ast} }& L^2 (\mu)^{\ast}.
}
\end{equation}




\section{More on uniform point separation}

Now let $(X, d, \mu)$ be a metric-measure space.  That is, $(X, d)$ is a  metric space and $\mu$ is a   Borel regular measure on  $X$ which is non-trivial on nonempty open sets.  We assume, throughout,  that  $0 < \mu (X) < \infty$.   Let $\ep > 0$. For $x, y \in X$ we set 
\begin{equation}
\label{Exy}
E (x, y, \ep; d) = \left\{z \in X \bigm\vert \bigl\vert d (x, z) - d (y, z)\bigr\vert \ge \ep \, d (x, y)\right\}.
\end{equation}

\noindent As in \cite{bilip}  (see also \cite{errbilip}), we say that a   regular Borel measure $\mu$ on $X$   separates points uniformly  with respect to the metric $d$ provided that there exist $\ep > 0$ and $c > 0$ such that $\mu \bigl(E (x, y, \ep; d)\bigr) \ge c$ for all $x, y \in X$.  It is shown in  \cite{bilip}  that for $(X, d, \mu)$ compact, the uniform separation  condition is equivalent to requiring the canonical map $\iota_d: X \aro L^2 (\mu)$   to be bi-Lipschitz.   We recall  from Section 5 that the canonical map $\iota_d$ induces a pseudometric  $\rho_{d}$ on $X$ by setting  $\rho_{d} (x, y) = \Vert\iota_{d} (x) - \iota_{d} (y)\Vert_2$.  The following result is an  immediate corollary of  \cite[Theorem 4.1]{bilip} and  Corollary  \ref{wowlem} above.

\begin{thm}  
\label{bL-sep} 
Let $(X, d, \mu)$ be a compact metric measure space.      The following statements are equivalent:
\begin{enumerate}
\item  The measure $\mu$ separates points uniformly with respect to $d$.
\item  The   canonical  map $\iota_d: (X, d) \aro L^2 (\mu)$ is bi-Lipschitz.
\item   The map $d: \bigl(X \times X, \rho_d \oplus \rho_d\bigr) \aro [0, \infty)$ is Lipschitz.  
\end{enumerate} 
\end{thm}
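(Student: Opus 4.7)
The plan is to prove the three-way equivalence by splitting it into the two halves (1) $\Leftrightarrow$ (2) and (2) $\Leftrightarrow$ (3). For (1) $\Leftrightarrow$ (2), my plan is simply to quote \cite[Theorem 4.1]{bilip}; the hypotheses there (compact metric measure space, Borel regular measure that is non-trivial on non-empty open sets, with $0 < \mu(X) < \infty$) match exactly the standing hypotheses of the current theorem.

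For (2) $\Leftrightarrow$ (3) I would invoke Corollary \ref{wowlem} with $\kappa = \sigma = d$. First, I need to verify that $d \in \K^+(\mu) \cap \K_0(\mu)$: non-negativity and $d(x, x) = 0$ are immediate; $\mu \times \mu$-measurability of $d$ follows from continuity of $d$ on the (compact, hence separable) space $X \times X$; and the Hilbert--Schmidt condition $\iint d^2 \, d\mu \, d\mu < \infty$ follows from boundedness of $d$ by $\diam(X) < \infty$ together with $\mu(X) < \infty$. With this in hand, Corollary \ref{wowlem} gives that condition (3), namely Lipschitz-ness of $d \colon (X \times X, \rho_d \oplus \rho_d) \aro [0, \infty)$, is equivalent to $\iota_d \colon (X, d) \aro L^2(\mu)$ being \emph{lower} Lipschitz.

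To close the loop with (2), I would observe that $\iota_d$ is automatically \emph{upper} Lipschitz in this setting: the triangle inequality $\bigl\vert d(x, z) - d(y, z)\bigr\vert \le d(x, y)$ and a single integration yield
\[
\Vert \iota_d(x) - \iota_d(y) \Vert_2^2 \,=\, \int_X \bigl\vert d(x, z) - d(y, z)\bigr\vert^2 \, d\mu(z) \,\le\, d(x, y)^2 \, \mu(X),
\]
so that $\Lip(\iota_d) \le \mu(X)^{1/2}$. Therefore lower-Lipschitz and bi-Lipschitz of $\iota_d$ coincide, which finishes (2) $\Leftrightarrow$ (3) and hence the theorem.

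There is really no serious obstacle here; the substantive content has been packaged in Corollary \ref{wowlem} (which itself rests on the Inverse Function Theorem \ref{ivt}) and in the cited \cite{bilip} result. What remains is the routine verification that the Section 5 setup applies to the compact metric measure space $(X, d, \mu)$, which compactness and Borel regularity handle without trouble.
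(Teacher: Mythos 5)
Your proposal is correct and takes essentially the same route as the paper, which presents the theorem as an immediate corollary of \cite[Theorem 4.1]{bilip} (giving the equivalence of (1) and (2)) together with Corollary \ref{wowlem} applied with $\kappa = \sigma = d$ (giving the equivalence with (3)). The routine verifications you supply --- that $d \in \K^+ (\mu) \cap \K_0 (\mu)$ and that $\iota_d$ is automatically upper Lipschitz, so lower Lipschitz coincides with bi-Lipschitz --- are exactly the details the paper leaves implicit.
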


Now let $(X, d, \mu)$ be a metric-measure space and assume that $\mu$ separates points uniformly with respect to $d$.  Let $\tau$ be a metric on $X$, bi-Lipschitz equivalent to $d$:  There exist $0 < \ell \le L$ such that $\ell \, d \le \tau \le L\,  d$.  Let $\delta = L\, d - \tau \le (L - \ell) \, d$ and  assume that $\delta$   is a uniform $q$-almost distance for some $q > 0$; see (\ref{almostdist}).   For  $x\ne y \in X$ and $z \in E (x, y, \ep; d)$ we compute
\[
\aligned
\ep \le \frac{\bigl\vert d (x, z) - d (y, z)\bigr\vert}{d (x, y)}  & \le \frac{\bigl\vert \tau (x, z) - \tau (y, z)\bigr\vert}{
L\, d (x, y)} + \frac{\bigl\vert \delta (x, z) - \delta (y, z)\bigr\vert}{L\, d (x, y)} \\
& \le \frac{\bigl\vert \tau (x, z) - \tau (y, z)\bigr\vert}{\rho (x, y)} + q \, (1 - \frac{\ell}{L}).
\endaligned
\]

\noindent If $\ell /L > 1 - \ep/ q$, then $\ep  - q \, (1 - \ell/ L) > 0$, so that 
\[
E(x, y, \ep; d) \subset E \left(x, y,  \left[\ep -  q \, \bigl(1 - \frac{\ell}{L}\bigr)\right]; \tau\right)
\]

\noindent and hence, $\mu\left(E \bigl(x, y,  \ep - q\,(1 - \ell /L); \tau\bigr)\right) \ge c$.   This  proves  the following lemma.

\begin{lemma}
\label{drhosepts} 
Let $(X, d, \mu)$ be a metric-measure space and assume that $\mu$ separates points uniformly with respect to $d$.  Let $\tau$ be a metric on $X$ satisfying  $\ell \, d \le \tau \le L\,  d$ for some $0 < \ell \le L$.  If the kernel $\delta = L\, d - \tau$ is a uniform almost distance   and if $\ell /L$ is sufficiently close to $1$,  then the measure $\mu$ separates points uniformly with respect to $\tau$.
\end{lemma}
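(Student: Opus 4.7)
The strategy is to flesh out the computation sketched in the paragraph immediately preceding the statement. Let $\ep, c > 0$ be constants witnessing that $\mu$ separates points uniformly with respect to $d$, so $\mu\bigl(E(x, y, \ep; d)\bigr) \ge c$ for every pair $x \ne y$ in $X$. My goal is to produce a single $\ep^{\prime} > 0$, independent of the pair $(x, y)$, realizing the set-theoretic inclusion
\[
E(x, y, \ep; d) \subset E(x, y, \ep^{\prime}; \tau);
\]
by monotonicity of $\mu$ this already gives $\mu\bigl(E(x, y, \ep^{\prime}; \tau)\bigr) \ge c$, which is the desired uniform separation with respect to $\tau$.

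To establish the inclusion I would fix $z$ in the left-hand side, use $L\, d = \tau + \delta$, and split by the triangle inequality
\[
L\bigl\vert d(x, z) - d(y, z)\bigr\vert \le \bigl\vert \tau(x, z) - \tau(y, z)\bigr\vert + \bigl\vert \delta(x, z) - \delta(y, z)\bigr\vert.
\]
After dividing by $L\, d(x, y)$ and using $\tau(x, y) \le L\, d(x, y)$ to replace $L\, d(x, y)$ by the smaller quantity $\tau(x, y)$ in the $\tau$-fraction, the membership $z \in E(x, y, \ep; d)$ rewrites as
\[
\ep \le \frac{\bigl\vert \tau(x, z) - \tau(y, z)\bigr\vert}{\tau(x, y)} + \frac{\bigl\vert \delta(x, z) - \delta(y, z)\bigr\vert}{L\, d(x, y)}.
\]

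The decisive step is to control the second summand. Because $\delta$ is a \emph{uniform} $q$-almost distance, Lemma \ref{aldistdom}(4), applied to $\delta$ at the triple $x, y, z$, guarantees (\ref{aldist1}), which together with $\delta \le (L - \ell)\, d$ yields
\[
\bigl\vert \delta(x, z) - \delta(y, z)\bigr\vert \le 2 q\bigl[\delta(x, y) \vee \delta(y, x)\bigr] \le 2 q (L - \ell)\, d(x, y).
\]
Substituting and setting $\ep^{\prime} := \ep - 2 q\bigl(1 - \ell/L\bigr)$ then gives
\[
\ep^{\prime} \le \frac{\bigl\vert \tau(x, z) - \tau(y, z)\bigr\vert}{\tau(x, y)},
\]
which is exactly the defining inequality for $z \in E(x, y, \ep^{\prime}; \tau)$. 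As soon as $\ell/L$ is chosen close enough to $1$ that $\ep^{\prime} > 0$, i.e.\ $\ell/L > 1 - \ep/(2 q)$, the set inclusion is established and the proof is complete.

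No serious obstacle stands in the way; the argument is essentially bookkeeping, with the decomposition $L\, d = \tau + \delta$ doing almost all the work. The one point needing genuine care is why the hypothesis is strengthened from ``$q$-almost distance'' to \emph{uniform} $q$-almost distance: by Lemma \ref{aldistdom}(4) a plain $q$-almost distance satisfies only (\ref{aldist1}) \emph{or} (\ref{aldist2}) at any given triple, with the choice depending on the triple, and we need (\ref{aldist1}) for \emph{every} triple $(x, y, z)$ simultaneously if the bound above is to apply uniformly.
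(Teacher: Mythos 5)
Your proof is correct and follows essentially the same route as the paper: the decomposition $L\,d = \tau + \delta$, the bound on $\bigl\vert \delta(x,z)-\delta(y,z)\bigr\vert$ via the uniform almost distance property, and the resulting inclusion $E(x,y,\ep;d)\subset E(x,y,\ep';\tau)$. The only difference is your constant $2q\bigl(1-\ell/L\bigr)$ where the paper writes $q\bigl(1-\ell/L\bigr)$; your version is actually the one consistent with the factor $2q$ appearing in (\ref{aldist1}), and the discrepancy is immaterial to the qualitative conclusion.
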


Given two metrics $d$ and $\rho$ on $X$, we say that $\rho$ is a  dilation\footnote{It is also called  a homothety  in the literature.} of $d$ provided that $\rho = r \, d$ for some  $r > 0$.  The following corollary is immediate.  

\begin{cor}
\label{dilate} 
Let $(X, d, \mu)$ be a metric-measure space and let  $\rho$ be a dilation of $d$.   Then the measure $\mu$ separates points uniformly with respect to $d$  if and only if it separates points uniformly with respect to $\rho$.
\end{cor}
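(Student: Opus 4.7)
The plan is to observe that the set $E(x, y, \varepsilon; d)$ defined in (\ref{Exy}) is literally unchanged under a dilation. Write $\rho = r\, d$ with $r > 0$. For any $x, y, z \in X$ one has $\bigl\vert \rho(x,z) - \rho(y,z)\bigr\vert = r \bigl\vert d(x,z) - d(y,z)\bigr\vert$ and $\rho(x,y) = r\, d(x,y)$, so the two factors of $r$ cancel in the defining inequality and
\[
E(x, y, \varepsilon; \rho) = E(x, y, \varepsilon; d)
\]
for every $\varepsilon > 0$ and every $x, y \in X$. Consequently the statement $\mu\bigl(E(x, y, \varepsilon; d)\bigr) \ge c$ is identical, as a condition on $(\varepsilon, c)$, to $\mu\bigl(E(x, y, \varepsilon; \rho)\bigr) \ge c$, and the corollary follows at once.

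If one prefers a derivation from Lemma \ref{drhosepts}, it can be obtained by taking $\tau = \rho = r\, d$ with $\ell = L = r$, so that $\ell / L = 1$ and $\delta = L\, d - \tau = 0$. The zero kernel is a uniform $q$-almost distance for every $q > 0$, and the hypotheses of the lemma are trivially met, giving one direction of the equivalence. The reverse direction follows by symmetry, since $d$ is a dilation of $\rho$ with factor $r^{-1}$.

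There is essentially no obstacle here, as the argument reduces to the scaling identity above; the corollary is merely a sanity check on the definition of uniform point separation, reflecting the fact that the condition only depends on the bi-Lipschitz equivalence class of $d$ up to a constant.
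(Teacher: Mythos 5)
Your proof is correct and matches the paper, which offers no written argument beyond calling the corollary ``immediate'' after Lemma \ref{drhosepts}: the scaling identity $E(x,y,\ep;\rho)=E(x,y,\ep;d)$ is exactly the immediate observation intended, and your alternative derivation via Lemma \ref{drhosepts} with $\ell=L=r$, $\delta=0$ also goes through (noting only that the paper's definition of a \emph{uniform} $q$-almost distance asks $q\ge 1/2$, which the zero kernel satisfies).
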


 As in \cite{errbilip}, we let  $\mathcal G (d)$ denote the class of all metrics on $X$ which are bi-Lipschitz equivalent to $d$.  We define a pseudometric $u$ on $\mathcal G  (d)$ as follows\footnote{This pseudometric $u$  on $\mathcal G (d)$ is different from the pseudometric $W_d$ given in \cite{errbilip}.}:  Given $\kappa$ and $\sigma$ in $\mathcal G  (d)$, we let 
\[
L (\sigma, \kappa) = \inf \{L\bigm\vert \kappa \le L\,  \sigma\} = \Lip \bigl(\id : (X, \sigma) \aro (X, \kappa)\bigr)
\]

\noindent and  $\ell (\sigma, \kappa) = \sup \{\ell \bigm\vert \ell \,\sigma \le \kappa\}$ so that  $\ell (\sigma, \kappa) \, \sigma \le \kappa \le L (\sigma, \kappa)\, \sigma$,   and we  set
\[
u (\sigma, \kappa) = \log \frac{L (\sigma, \kappa)}{\ell (\sigma, \kappa)}.
\]

\noindent Clearly  $u (\sigma, \kappa) \ge 0$ with $u (\sigma, \kappa) = 0$ if and only if $\sigma$ is a dilation of $\kappa$.  That $u (\kappa, \sigma) = u (\sigma, \kappa)$  is also clear.  The triangle inequality $u (\sigma, \kappa) \le u (\sigma, \f) + u (\f, \kappa)$ follows from
\[
\ell (\sigma, \f) \, \ell (\f,  \kappa) \le \ell (\sigma, \kappa) \le \frac{\sigma}{\kappa} = \frac{\sigma}{\f} \cdot \frac{\f}{\kappa} \le L (\sigma, \kappa) \le L (\sigma, \f) \, L (\f, \kappa).
\]

\noindent  By replacing $\kappa$ with the dilation $\kappa^{\prime} = \kappa / L (\sigma, \kappa)$,  we see that the bi-Lipschitz relation $\ell (\sigma, \kappa) \, \sigma \le \kappa \le L (\sigma, \kappa) \, \sigma$ is equivalent to  $\exp [- u (\sigma, \kappa^{\prime})] \, \sigma \le \kappa^{\prime} \le \sigma$.     The following is a  corollary of Lemma \ref{bldisaldis} and Lemma \ref{drhosepts}. 

\begin{cor}
\label{drho2} 
Let $(X, d, \mu)$ be a metric-measure space and assume that $\mu$ separates points uniformly with respect to $d$.  Let $\rho \in \mathcal G (d)$ satisfy  $L (\rho, d)  < 1$.    If  $u (d, \rho)$ is sufficiently small,  then the measure $\mu$ separates points uniformly with respect to $\rho$.
\end{cor}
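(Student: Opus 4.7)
The plan is to apply Lemma~\ref{drhosepts} with base metric $d$ and target metric $\tau = \rho$.  Writing $a = \ell(\rho, d)$ and $b = L(\rho, d)$, the hypothesis $b < 1$ together with the bi-Lipschitz bounds $a\rho \le d \le b\rho$ rearrange to $d/b \le \rho \le d/a$, which places $\rho$ inside the sandwich $\ell\, d \le \rho \le L\, d$ required by that lemma, with $u = u(d, \rho) = \log(b/a)$.

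The subtle point is that the sharp upper choice $L = 1/a$ makes the kernel $d/a - \rho$ vanish wherever $\rho$ realizes its maximum ratio to $d$, so it cannot be any almost distance there.  To remedy this I would introduce a small perturbation $\eta > 0$ and set $L_\eta = (1 + \eta)/a$, $\ell = 1/b$, $\delta_\eta = L_\eta\, d - \rho$.  Elementary estimates yield the pointwise sandwich
\[
\frac{\eta}{a}\, d \;\le\; \delta_\eta \;\le\; \frac{(1 + \eta)\, b - a}{a\, b}\, d,
\]
so Lemma~\ref{bldisaldis}(1), applied with $\sigma = d$ and $\kappa = \delta_\eta$, yields an almost-distance constant for $\delta_\eta$ of the form $q = 1 + (1 - e^{-u})/\eta$.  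Symmetry of $\delta_\eta$ together with the triangle inequalities for $d$ and $\rho$ let one bound both $|\delta_\eta(x, z) - \delta_\eta(y, z)|$ and $|\delta_\eta(x, z) - \delta_\eta(x, y)|$ linearly in $\delta_\eta(x, y)$ and $\delta_\eta(y, z)$ respectively, lifting $\delta_\eta$ to a uniform almost distance.

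A short calculation gives $\ell/L_\eta = e^{-u}/(1 + \eta)$, so that $1 - \ell/L_\eta \to 0$ whenever $\eta$ and $u$ do.  The heart of the argument is then to choose $\eta$ as a function of $u$ so that the uniform almost-distance constant of $\delta_\eta$ stays bounded and its product with $1 - \ell/L_\eta$ falls below the separation parameter $\ep$ attached to $(X, d, \mu)$.  A natural attempt is to let $\eta$ be a positive fractional power of $1 - e^{-u}$, so that both $q$ and $\ell/L_\eta$ approach their good limits as $u \to 0$;  one then concludes by Lemma~\ref{drhosepts} applied with these values of $\ell$ and $L_\eta$.

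The hard step is precisely this calibration.  The worst-case uniform almost-distance constant of $\delta_\eta$ scales like $1/\eta$, so $\eta$ cannot be taken too small; yet $\eta$ must be small enough that $\ell/L_\eta$ stays close to $1$ relative to the fixed $\ep$ coming from the uniform separation for $d$.  The phrase ``$u(d, \rho)$ sufficiently small'' in the statement is exactly the quantitative threshold on $u$, depending on $\ep$, at which a simultaneously valid choice of $\eta$ first exists.
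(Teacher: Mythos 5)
Your reduction to Lemma~\ref{drhosepts} is the right target, and you correctly spot that the sharp envelope $L = L(d,\rho) = 1/a$ ruins the almost-distance property of $L\,d - \rho$. But the repair by inflation to $L_\eta = (1+\eta)/a$ cannot be calibrated, and the step you defer as ``the hard step'' in fact fails for every choice of $\eta$. The constant Lemma~\ref{drhosepts} consumes is not the almost-distance constant $q = 1 + (1-e^{-u})/\eta$ from Lemma~\ref{bldisaldis}(1) but the \emph{uniform} constant $q'$ of (\ref{aldist1}), i.e.\ the smallest $q'$ with $|\delta_\eta(x,z)-\delta_\eta(y,z)| \le 2q'\,\delta_\eta(x,y)$, and the error term to be beaten is $2q'(1-\ell/L_\eta) < \ep \le 1$. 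Your triangle-inequality bound gives $|\delta_\eta(x,z)-\delta_\eta(y,z)| \le L_\eta\, d(x,y) + \rho(x,y) \le \frac{2+\eta}{\eta}\,\delta_\eta(x,y)$, so $2q' = \frac{2+\eta}{\eta}$, while $1 - \ell/L_\eta = \frac{\eta + v}{1+\eta}$ with $v = 1 - e^{-u}$; the product is $\frac{(2+\eta)(\eta+v)}{\eta(1+\eta)} \ge \frac{2+\eta}{1+\eta} > 1$ for all $\eta, v > 0$, and your suggested $\eta = v^{1/2}$ drives it to $2$, not to $0$. Nor is this an artifact of a lazy estimate: a collinear triple with $d(x,y)=s$, $d(y,z)=1$, $d(x,z)=1+s$, $\rho(x,y)=s/b$, $\rho(y,z)=1/a$, $\rho(x,z)=1/a - s/b$ is a legitimate pair of metrics satisfying $d/b \le \rho \le d/a$ for $s$ small, and on it $|\delta_\eta(x,z)-\delta_\eta(y,z)|/\delta_\eta(x,y) = \frac{2+\eta-v}{\eta+v}$, so $2q'$ really is of this order and the error term really is bounded below by $1$. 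Conflating the almost-distance constant with the uniform constant is the precise locus of the gap: with $q$ in place of $q'$ your calibration $\eta = v^{1/2}$ would work, but the uniform constant is genuinely larger.

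A telling symptom is that you never use the hypothesis $L(\rho,d) < 1$: the rearrangement $d/b \le \rho \le d/a$ is valid for any $b$. That strict inequality is exactly the ``$L<1$'' demanded by Lemma~\ref{bldisaldis}(2), and the paper's citation of that lemma points to applying part (2) with $\sigma = \rho$ and $\kappa = d$, which makes $\rho - d$ a $\frac{1-\ell(\rho,d)}{1-L(\rho,d)}$-almost distance --- a constant tending to $1$, not to $\infty$, as $u \to 0$ with $L(\rho,d)$ held away from $1$, and one for which the companion uniform bound $|(\rho-d)(x,z)-(\rho-d)(y,z)| \le \frac{1+L(\rho,d)}{1-L(\rho,d)}(\rho-d)(x,y)$ is likewise independent of $u$. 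Any derivation faithful to the paper's two cited lemmas must route the almost-distance control through that clause; your $\delta_\eta$ discards the one quantitative input that keeps the constants bounded, and the argument does not close without it.
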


  The pseudometric $\rho_d$ induced on $X$ by the canonical map    $\iota_d: X \aro L^2 (\mu)$  satisfies $\ell (d, \rho_d) \, d \le \rho_d \le d$ so that      $\iota_d$ is bi-Lipschitz if and only if $\ell (d, \rho_d) > 0$. 
 Now, again as in \cite{errbilip},  let  $\mathcal E (d)$ denote the (possibly empty) subset of  $\mathcal G (d)$ consisting of the metrics whose corresponding canonical maps      $X \aro L^2 (\mu)$   are bi-Lipschitz.   The following result is a  consequence of Corollary \ref{drho2}.
 
 \begin{cor}
\label{sigmaphi} 
Let $(X, d, \mu)$ be a compact metric-measure space and assume that $\sigma \in \mathcal E (d)$.  Let $\f \in \mathcal G (d)$ satisfy $L (\f, \sigma)  < 1$.    If  $u (\sigma, \f)$ is sufficiently small,  then $\f \in \mathcal E (d)$.
\end{cor}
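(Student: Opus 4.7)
The plan is to reduce this to Corollary \ref{drho2} via Theorem \ref{bL-sep}, which bridges the two equivalent notions: ``$\iota_\kappa$ bi-Lipschitz'' on the one hand, and ``$\mu$ separates points uniformly with respect to $\kappa$'' on the other. Since $\mathcal E(d)$ is defined in terms of canonical maps but Corollary \ref{drho2} is phrased in terms of uniform point separation, I would simply translate back and forth.

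First I would observe that $(X,\sigma,\mu)$ is itself a compact metric-measure space: because $\sigma \in \mathcal G(d)$, the metrics $\sigma$ and $d$ are bi-Lipschitz equivalent, hence induce the same topology, so compactness of $(X,d)$ yields compactness of $(X,\sigma)$; and Borel regularity of $\mu$ together with non-triviality on non-empty open sets is preserved since the Borel $\sigma$-algebra and the collection of non-empty open sets depend only on the topology. The same remarks apply verbatim to $(X,\varphi,\mu)$ once $\varphi \in \mathcal G(d)$ is in hand, which it is by hypothesis. I would also note that $\mathcal G(d) = \mathcal G(\sigma)$, since bi-Lipschitz equivalence of metrics is an equivalence relation, so $\varphi \in \mathcal G(\sigma)$.

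Next, since $\sigma \in \mathcal E(d)$ means $\iota_\sigma : X \to L^2(\mu)$ is bi-Lipschitz, Theorem \ref{bL-sep} applied to the compact metric-measure space $(X,\sigma,\mu)$ gives that $\mu$ separates points uniformly with respect to $\sigma$. I now apply Corollary \ref{drho2} with the substitution $d \mapsto \sigma$ and $\rho \mapsto \varphi$: all hypotheses are in place, namely $(X,\sigma,\mu)$ is a metric-measure space on which $\mu$ uniformly separates points with respect to $\sigma$, $\varphi \in \mathcal G(\sigma)$, $L(\varphi,\sigma) < 1$, and $u(\sigma,\varphi)$ is as small as needed. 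The conclusion is that $\mu$ separates points uniformly with respect to $\varphi$.

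Finally, I would invoke Theorem \ref{bL-sep} a second time, now applied to the compact metric-measure space $(X,\varphi,\mu)$ in the opposite direction, to conclude that the canonical map $\iota_\varphi : (X,\varphi) \to L^2(\mu)$ is bi-Lipschitz, and hence $\varphi \in \mathcal E(d)$. There is no real obstacle here: the entire proof is a bookkeeping exercise that composes the two implications of Theorem \ref{bL-sep} with one application of Corollary \ref{drho2}. The only thing to be mildly careful about is the verification that replacing $d$ with $\sigma$ leaves us inside the hypothesis class of Corollary \ref{drho2} and of Theorem \ref{bL-sep}, which as noted is automatic from the bi-Lipschitz equivalence.
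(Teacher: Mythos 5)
Your proposal is correct and follows exactly the route the paper intends: the paper presents this as "a consequence of Corollary \ref{drho2}," and the only content of the deduction is the back-and-forth translation via Theorem \ref{bL-sep} between membership in $\mathcal E(d)$ and uniform point separation, together with the observation that compactness, the Borel structure, and the class $\mathcal G(d)$ are unchanged when $d$ is replaced by the bi-Lipschitz equivalent metric $\sigma$. Your bookkeeping (including the check that $L(\f,\sigma)<1$ and the smallness of $u(\sigma,\f)$ are precisely the hypotheses of Corollary \ref{drho2} after substituting $d\mapsto\sigma$, $\rho\mapsto\f$) is the whole proof.
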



\section{Remarks}

\noindent  {\bf{I. }} Given  a kernel  $\kappa \in \mathcal K (X)$,  a basis for  the  smallest topology in which all $\kappa (x, -)$, $x \in X$, are continuous is the family $\left\{U^R [\ep, x] (y) \bigm\vert \ep > 0, x \in X, y \in X\right\}$, where
 \[
 \aligned
 U^R [\ep, x] (y) &= \kappa (x, -)^{- 1} \bigl(\kappa (x, y) - \ep, \kappa (x, y) + \ep\bigr)\\
 &=  \left\{z \in X \bigm\vert \vert\kappa (x, z) - \kappa (x, y) \vert < \ep\right\}.
 \endaligned
  \]

\noindent Similarly, a basis for the smallest topology in which all $\kappa (-, x)$, $x \in X$, are continuous is the family $\left\{U^L [\ep, x] (y) \bigm\vert \ep > 0, x \in X, y \in X\right\}$, where
 \[
 \aligned
 U^L [\ep, x] (y) &= \kappa (-, x)^{- 1} \bigl(\kappa (y, x) - \ep, \kappa (y, x) + \ep\bigr)\\
 &=  \left\{z \in X \bigm\vert \vert\kappa (z, x) - \kappa (y, x) \vert < \ep\right\}.
 \endaligned
  \]

\noindent Then the family $\left\{U^R [\ep, x] (y),  U^L [\ep, x] (y) \bigm\vert  \ep > 0, x \in X, y \in X\right\}$ is a sub-basis for the $\kappa$-topology $\T_{\kappa}$.  

\vskip5pt

\noindent {\bf II. }  
 We recall the following discussion from \cite{bilip}.   Let $(X, \kappa, \mu)$ be a compact metric-measure space. Without loss of generality, we     assume further  that $\mu (X) = 1$.   Then the canonical map $\iota_{\kappa}: X \aro L^2 (\mu)$ lifts to $\varTheta_{\kappa}: X \aro \Lip_{\kappa} (X)$ by setting $\varTheta_{\kappa} (x) = \kappa (x, -)$.   This lift $\varTheta_{\kappa}$ is totally discontinuous with $\varTheta_{\kappa} (X)$ metrically discrete: For $x, y \in X$, we have
$
\left\Vert\varTheta_{\kappa} (x) - \varTheta_{\kappa} (y)\right\Vert_{\Lip_{\kappa}} =  2  \vee \kappa \, (x, y).
$    Hence, if $(X, \kappa)$ is a compact  uncountable metric space,  then the set $\varTheta_{\kappa} (X)$ is closed, uncountable and discrete in the norm topology of $\Lip_{\kappa} (X)$.  In particular, $\Lip_{\kappa} (X)$ is not separable.

  \vskip5pt

It follows from simple calculation that $\Vert\iota_{\kappa\ast\kappa} (x) - \iota_{\kappa\ast\kappa} (y)\Vert_2 \le \diam (X) \, \kappa (x, y)$  for $x, y \in X$.    Moreover, the lift  $\varTheta_{\kappa\ast\kappa}: X \aro \Lip_{\kappa} (X)$ of $\iota_{\kappa\ast\kappa}$ is Lipschitz. Then  diagram (\ref{k*k}) may be amended to give the commutative diagram 
\begin{equation}
\label{liftcan}
\xymatrix{
& & L^2 (\mu) \ar[dd]^{J_{\kappa}} \ar[dl]^{T_{\kappa}} \\
X \ar@/^1.1pc/[urr]^{\iota_{\kappa}} \ar[r]^{\iota_{\kappa\ast\kappa}} \ar@/_1.0pc/[drr]^{\varTheta_{\kappa\ast\kappa}} & L^2 (\mu) \\
&& \Lip_{\kappa} (X). \ar@{_{(}->}[ul]
}
\end{equation}

 The following lemma is from \cite{errbilip}.

\begin{lemma} \cite[Lemma 6.3]{errbilip}
\label{kstark}
Let $(X, \kappa, \mu)$ be a  metric-measure space.   Then the canonical map $\iota_{\kappa\ast\kappa}: X \aro L^2 (\mu)$    is injective.  Furthermore, if $\diam (X) < \infty$, then $\iota_{\kappa\ast\kappa}$ is Lipschitz.
\end{lemma}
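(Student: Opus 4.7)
The plan is to handle the two assertions separately, relying throughout on the symmetry of the metric $\kappa$ (which lets me write $(\kappa\ast\kappa)(x,y) = \langle \iota_\kappa(x), \iota_\kappa(y)\rangle$), the triangle inequality for $\kappa$, and the standing hypothesis that $\mu$ is non-trivial on every nonempty open set.

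First I would dispose of the Lipschitz claim. Assuming $\diam(X) < \infty$, the triangle inequality gives $|\kappa(x, w) - \kappa(y, w)| \le \kappa(x, y)$ for every $w$, so
\[
\bigl|(\kappa\ast\kappa)(x, z) - (\kappa\ast\kappa)(y, z)\bigr| \le \kappa(x, y) \int \kappa(w, z)\, d\mu(w) \le \diam(X)\, \mu(X)\, \kappa(x, y)
\]
for every $z \in X$. Squaring in $z$ and integrating against $\mu$ then yields $\|\iota_{\kappa\ast\kappa}(x) - \iota_{\kappa\ast\kappa}(y)\|_2 \le \diam(X)\, \mu(X)^{3/2}\, \kappa(x, y)$, which (after the normalization $\mu(X) = 1$ mentioned in remark II) recovers the estimate quoted just before the lemma.

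Next I would prove injectivity. If $\iota_{\kappa\ast\kappa}(x) = \iota_{\kappa\ast\kappa}(y)$ in $L^2(\mu)$, then $(\kappa\ast\kappa)(x, z) = (\kappa\ast\kappa)(y, z)$ for $\mu$-a.e.\ $z$. The same triangle-inequality estimate (now with $z, z'$ in place of $x, y$) shows that $z \mapsto (\kappa\ast\kappa)(x, z)$ is continuous: the integral $\int \kappa(x, w)\, d\mu(w)$ is finite by Cauchy--Schwarz and $\mu(X) < \infty$. By continuity together with the full support of $\mu$, the $\mu$-a.e.\ equality upgrades to pointwise equality on $X$. Specializing to $z = x$ and to $z = y$ and using symmetry gives
\[
\|\iota_\kappa(x)\|_2^2 = \langle \iota_\kappa(x), \iota_\kappa(y)\rangle = \|\iota_\kappa(y)\|_2^2,
\]
whence $\|\iota_\kappa(x) - \iota_\kappa(y)\|_2 = 0$, i.e., $\kappa(x, -) = \kappa(y, -)$ $\mu$-a.e. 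The very same continuity-plus-full-support argument then promotes this to pointwise equality, and setting $z = x$ forces $\kappa(x, y) = \kappa(x, x) = 0$, so $x = y$.

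The only place where care is needed is the two a.e.-to-pointwise upgrades; each rests on the continuity of the function in question (automatic for $\kappa(x, -)$ via the triangle inequality, and inherited by $(\kappa\ast\kappa)(x, -)$ through the same estimate) together with the full-support hypothesis on $\mu$. Once that point is dispatched, everything else is routine bookkeeping with symmetry and the triangle inequality.
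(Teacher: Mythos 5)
Your proof is correct, and it matches the argument the paper points to: the Lipschitz bound is exactly the displayed estimate $\Vert\iota_{\kappa\ast\kappa}(x)-\iota_{\kappa\ast\kappa}(y)\Vert_2\le\diam(X)\,\kappa(x,y)$ preceding the lemma (your extra factor $\mu(X)^{3/2}$ disappears under the normalization $\mu(X)=1$), and your injectivity step is the polarization identity $Z(\kappa\ast\kappa)=-\tfrac{1}{2}\rho_\kappa^2$ from Section 5 combined with the a.e.-to-pointwise upgrade via continuity and the full-support hypothesis. Note that this paper only cites the lemma from \cite{errbilip} rather than proving it, so your write-up is in fact more self-contained than the source; the one hypothesis you should make explicit is that $\kappa(x,-)\in L^2(\mu)$ for every $x$ (needed for $\iota_\kappa$, and via Cauchy--Schwarz for the finiteness of $\int\kappa(x,w)\,d\mu(w)$ in your continuity argument), which is implicit in the lemma's assumption that the canonical maps are defined.
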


\vskip5pt  

\noindent {\bf III. }  Let $(X,  d, \mu)$ be a compact metric-measure space.  In  diagram (\ref{k*k}),  since $d$ is a metric, the map $\iota_{d \ast  d}$ is  one-one and Lipschitz   by Lemma \ref{kstark}.   Moreover,  since  $\iota_{d}$ is Lipschitz and $T_{d}$ is compact,  the next result follows from  \cite[Lemma 2.4 and Theorem 2.5]{bilip}; it is a restatement of \cite[Theorem 6.1]{errbilip}.

\begin{thm}
\label{d*d} 
Let  $(X, d, \mu)$ be a compact metric-measure space.  If the canonical map $\iota_{d \ast d}: X \aro L^2 (\mu)$ is lower Lipschitz, then   there exists a bi-Lipschitz embedding  $\xymatrix{X\,  \ar@{^{(}->}[r] &\R^N}$  for some $N \in \N$.
\end{thm}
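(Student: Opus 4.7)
The plan is to exploit the factorization $\iota_{d\ast d} = T_d \circ \iota_d$ from diagram (\ref{k*k}) together with compactness of $T_d$ to produce a bi-Lipschitz embedding into a finite-dimensional subspace of $L^2 (\mu)$. By Lemma \ref{kstark}, $\iota_{d\ast d}$ is injective and Lipschitz, and by hypothesis it is lower Lipschitz; call the lower Lipschitz constant $\ell > 0$. Also, since $d$ is a metric, $\iota_d: (X, d) \aro L^2 (\mu)$ is Lipschitz (as $\rho_d \le d$) with constant $L_\iota = \Lip (\iota_d)$, and $\iota_d (X) \subset L^2 (\mu)$ is bounded since $X$ is compact.

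Next I would invoke the fact that $T_d$ is a Hilbert--Schmidt operator on $L^2 (\mu)$, hence compact and self-adjoint. By the spectral theorem, there is a sequence of finite-rank self-adjoint operators $T_{_N}$ with $\operatorname{rank}(T_{_N}) \le N$ such that $\Vert T_d - T_{_N}\Vert \aro 0$ as $N \aro \infty$. Choose $N$ large enough that $\Vert T_d - T_{_N}\Vert \le \ell / (2 L_\iota)$, and consider the composition $\Phi_{_N} = T_{_N} \circ \iota_d : X \aro \operatorname{range}(T_{_N})$.

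For any $x, y \in X$, the triangle inequality in $L^2(\mu)$ yields
\[
\aligned
\Vert\Phi_{_N} (x) - \Phi_{_N} (y)\Vert_2 &\ge \Vert\iota_{d\ast d} (x) - \iota_{d\ast d} (y)\Vert_2 - \Vert T_d - T_{_N}\Vert \cdot \Vert\iota_d (x) - \iota_d (y)\Vert_2\\
&\ge \ell\, d (x, y) - \frac{\ell}{2 L_\iota} \cdot L_\iota\, d (x, y) = \frac{\ell}{2} \, d (x, y),
\endaligned
\]
while the upper bound $\Vert\Phi_{_N} (x) - \Phi_{_N} (y)\Vert_2 \le \Vert T_{_N}\Vert \cdot L_\iota \, d (x, y)$ is immediate. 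Thus $\Phi_{_N}$ is a bi-Lipschitz embedding of $(X, d)$ into the finite-dimensional Hilbert space $\operatorname{range}(T_{_N})$, which is isometric to $\R^N$ (or a subspace thereof).

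The main obstacle is ensuring that the lower Lipschitz property survives the finite-rank truncation. This requires both pieces: compactness of $T_d$ (so that finite-rank approximations exist in operator norm) and the boundedness/Lipschitz control on $\iota_d (X)$ (so that the operator-norm error translates into a controlled pointwise error on the image). Once these are combined with the hypothesized lower Lipschitz bound on $\iota_{d\ast d}$, the claim is an essentially formal consequence, as indicated by the reference to \cite[Lemma 2.4 and Theorem 2.5]{bilip}.
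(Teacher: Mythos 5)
Your proposal is correct and takes essentially the same route as the paper: the paper's proof consists of the factorization $\iota_{d\ast d} = T_{d} \circ \iota_{d}$ from diagram (\ref{k*k}) together with an appeal to \cite[Lemma 2.4 and Theorem 2.5]{bilip}, whose content is exactly the finite-rank truncation of the compact operator $T_{d}$ that you carry out explicitly. The only cosmetic difference is that you invoke Lemma \ref{kstark} for injectivity, which is already implied by the lower Lipschitz hypothesis.
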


   Letting  $\kappa = d$ and $\sigma = d \ast d$ in  Theorem  \ref{ivt},  we see that the canonical map $\iota_{d\ast d}: (X, d) \aro L^2 (\mu)$ is lower Lipschitz if and only if  the map $d: \bigl(X \times X, \rho_{d\ast d} \oplus \rho_{d\ast d}\bigr) \aro [0, \infty)$ is Lipschitz.  This gives the following corollary of Theorem \ref{d*d}.

\begin{cor}
\label{wowcor}
Let $(X, d, \mu)$ be a compact metric-measure space.  If the map $d: \bigl(X \times X, \rho_{d\ast d} \oplus \rho_{d\ast d}\bigr) \aro [0, \infty)$ is Lipschitz, then  there exists a bi-Lipschitz embedding  of $(X, d)$ into some $\R^N$.
\end{cor}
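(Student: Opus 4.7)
The plan is to deduce this directly from Theorem \ref{d*d} by invoking Theorem \ref{ivt} (the Inverse Function Theorem) to translate the Lipschitz condition on $d$ into a lower Lipschitz statement about the canonical map $\iota_{d\ast d}$.

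First, I would set $\kappa = d$ and $\sigma = d\ast d$ in the notation of Theorem \ref{ivt}, taking $Y = L^2(\mu)$ equipped with the pseudometric $\Vert\cdot - \cdot\Vert_2$ and the map $f = \iota_{d\ast d}: X \aro L^2(\mu)$. Since $d$ is a metric, it is certainly a weak metric, so the hypothesis of Theorem \ref{ivt} on $\kappa$ is met. By the defining equation (\ref{iota^ast}), the pullback pseudometric is
\[
f^{\ast}\bigl(\Vert\cdot - \cdot\Vert_2\bigr)(x, y) = \Vert\iota_{d\ast d}(x) - \iota_{d\ast d}(y)\Vert_2 = \rho_{d\ast d}(x, y),
\]
so the hypothesis ``$d: (X \times X, \rho_{d\ast d} \oplus \rho_{d\ast d}) \aro [0, \infty)$ is Lipschitz'' is exactly the Lipschitz condition appearing in Theorem \ref{ivt}. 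Applying the equivalence in Theorem \ref{ivt}, I would then conclude that $\iota_{d\ast d}: (X, d) \aro L^2(\mu)$ is lower Lipschitz.

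Second, I would invoke Theorem \ref{d*d}: since $(X, d, \mu)$ is a compact metric-measure space and $\iota_{d\ast d}$ is lower Lipschitz, there exists $N \in \N$ and a bi-Lipschitz embedding $X \hookrightarrow \R^N$. This completes the proof.

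There is essentially no obstacle beyond verifying that the pullback pseudometric coincides with $\rho_{d\ast d}$, which is immediate from the definition; the corollary is a straightforward juxtaposition of the two theorems already established, which is why the author presents it as a corollary rather than a theorem with its own proof.
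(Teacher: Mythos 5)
Your proposal is correct and follows exactly the paper's own route: the paper derives Corollary \ref{wowcor} by setting $\kappa = d$ and $\sigma = d\ast d$ in Theorem \ref{ivt} (equivalently, applying Corollary \ref{wowlem}) to identify the Lipschitz condition on $d$ with lower Lipschitzness of $\iota_{d\ast d}$, and then invoking Theorem \ref{d*d}. Your verification that the pullback pseudometric is $\rho_{d\ast d}$ is the only detail worth writing down, and you have it right.
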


\vskip5pt

\noindent  {\bf{IV. }} Let $(X, d, \mu)$ be a metric-measure space with $0 < \mu (X) < \infty$ and let $\f : X \aro [0, \infty)$ by setting 
\[
\f (x) = \frac{1}{\mu (X)} \int_X d (x, z) \, d \mu (z).
\]

\noindent Then $\bigl\vert \f (x) - \f (y)\bigr\vert \le d (x, y) \le \f (x) + \f (y)$ so that $\f$ is a  norm function on $(X, d)$; see Section 4.   Moreover, its average value $\bar\f \le 2 \inf  \{\f (x) \bigm\vert x \in X\}$. 

\vskip5pt

If $(X, d)$  is the interval $[0, 1]$ with the Euclidean metric and $\mu$  the Lebesgue measure, then   $\f (x) = x^2 - x + 1/ 2$.  However, there are spaces for which the function $\f$ is constant.  For instance, if $(X, d)$  is the unit circle with arc length, then $\f \equiv \pi /2$.   If,  instead, the unit circle  is equipped with  the Euclidean metric inherited from $\R^2$, then $\f \equiv 4/\pi$.    

\vskip5pt

Now suppose that there is a surjective  isometry $h : (X, d) \aro (X, d)$ that satisfies $\mu (E) = \mu \bigl(h^{-1} (E)\bigr)$ for every $\mu$-measurable subset $E \subset X$. In this case, we say that $h$  preserves the measure $\mu$.  Given $x_1, x_2 \in X$ with $h (x_1) = x_2$, we have 
\[
\aligned
\f (x_1) &= \frac{1}{\mu (X)} \int_X d (x_1, z) \, d \mu (z) = \frac{1}{\mu (X)} \int_X d \bigl(h^{-1} (x_2),  h^{-1} (w)\bigr) \, d \mu \bigl(h^{-1} (w)\bigr) \\
&= \frac{1}{\mu (X)} \int_X d (x_2, w) \, d \mu (w) = \f (x_2).
\endaligned
\]

\noindent  Let $\mathcal J (X, d, \mu)$ denote the group of surjective  isometries of $(X, d)$ which preserve the measure $\mu$.

\begin{lemma}
If   the   action of the group $\mathcal J (X, d, \mu)$  on $X$ is  transitive, then the function $\f \equiv  {\text{constant}}$.
\end{lemma}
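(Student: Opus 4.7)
The proof is essentially contained in the computation displayed just before the statement of the lemma. The plan is to observe that the displayed identity
\[
\varphi(x_1) = \frac{1}{\mu(X)} \int_X d\bigl(h^{-1}(x_2), h^{-1}(w)\bigr) \, d\mu\bigl(h^{-1}(w)\bigr) = \varphi(x_2)
\]
was established for an \emph{arbitrary} $h \in \mathcal J(X, d, \mu)$ with $h(x_1) = x_2$. Hence the function $\varphi$ is constant on every orbit of the action of $\mathcal J(X, d, \mu)$ on $X$.

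Given the hypothesis that the action is transitive, there is only one orbit, namely $X$ itself. So for any pair $x_1, x_2 \in X$ we may choose some $h \in \mathcal J(X, d, \mu)$ with $h(x_1) = x_2$, and the preceding displayed identity yields $\varphi(x_1) = \varphi(x_2)$. Since $x_1, x_2$ are arbitrary, $\varphi$ is constant on $X$.

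There is no real obstacle here; the content of the lemma is simply the observation that $\varphi$ is an invariant of the $\mathcal J(X, d, \mu)$-action, combined with transitivity. The substantive work — verifying that a measure-preserving surjective isometry preserves $\varphi$ via the change of variables $z = h^{-1}(w)$ — has already been carried out in the paragraph preceding the lemma, so the proof reduces to invoking that computation and the definition of transitivity.
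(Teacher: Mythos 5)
Your proposal is correct and coincides with the paper's own argument: the paper's proof of this lemma is precisely the change-of-variables computation displayed in the preceding paragraph (valid for any $h \in \mathcal J(X,d,\mu)$ with $h(x_1)=x_2$), combined with the observation that transitivity supplies such an $h$ for every pair $x_1, x_2 \in X$. Nothing further is needed.
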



\end{document}